
\documentclass[12pt,draft]{amsart}

\usepackage{a4}
\usepackage[pstarrows]{pict2e}
\usepackage[utf8]{inputenc}
\usepackage[T1]{fontenc}

\DeclareMathOperator{\moda}{mod}

\long\def\comment#1\endcomment{}

\newtheorem{theorem}{Theorem}[section]
\newtheorem{prop}[theorem]{Proposition}
\newtheorem{lemma}[theorem]{Lemma}
\newtheorem{cor}[theorem]{Corollary}
\newtheorem{conj}[theorem]{Conjecture}
\newtheorem*{conjstar}{Conjecture}
\newtheorem*{question}{Question}
\newtheorem*{propstar}{Proposition}
\newtheorem*{claim}{Claim}
\theoremstyle{definition}
\newtheorem{defn}[theorem]{Definition}
\newtheorem{remark}[theorem]{Remark}
\newtheorem{example}[theorem]{Example}

\def\Bbb{\mathbb} \def\cal{\mathcal}
\def\wt#1{\widetilde{#1}}
\def\wl#1{\overline{#1}}
\def\sier#1{{\cal O}_{#1}}
\def\C{{\Bbb C}} \def\N{{\Bbb N}}
\def\cT{{\cal T}}

\unitlength 30pt

\newcommand{\bp}{\begin{picture}}
\newcommand{\bpn}{\begin{picture}(0,0)}
\newcommand{\ep}{\end{picture}}

\def\vir{\makebox(0,0){\rule{8pt}{8pt}}}
\def\cir{\circle*{0.23}}

\def\ci#1{%
{\bpn\put(0,0){\cir}\put(0,0.2){\makebox(0,0)[b]{$\scriptstyle #1$}}\ep}}
\def\vi#1{%
{\bpn\put(0,0){\vir}\put(0,0.3){\makebox(0,0)[b]{$\scriptstyle #1$}}\ep}}
\def\vit#1{%
{\bpn\put(0,0){\vir}\put(0.15,0){\makebox(0,0)[l]{$\scriptstyle #1$}}\ep}}
\def\cirr#1{%
{\bpn\put(0,0){\cir}\put(0.2,0){\makebox(0,0)[l]{$\scriptstyle #1$}}\ep}}

\def\lijn{\line(1,0){1}}

\def\keten{{\bpn\put(0,0){\line(1,0){0.5}}
 \put(1,0){\makebox(0,0){$\dots$}}\put(2,0){\line(-1,0){0.5}}\ep}}
\def\ketvert{{\bpn\put(0,0){\line(0,-1){0.5}}
 \put(0,-.7){\makebox(0,0){$\vdots$}}\put(0,-1.6){\line(0,1){0.5}}\ep}}


\begin{document}

\title{\bf Simple surface singularities}
\author{Jan Stevens}
\address{Department of Mathematical Sciences, Chalmers University of 
Technology and University of Gothenburg. 
SE 412 96 Gothenburg, Sweden}
\email{stevens@chalmers.se}

\begin{abstract}
By the famous ADE classification rational double points are simple. Rational 
triple points are also simple. We conjecture that 
the simple normal surface singularities are exactly those rational singularities,
whose resolution graph can be obtained from the graph of a rational
double point or rational triple point by making (some) vertex weights
more negative. For rational singularities we show one direction in general, 
and  the other direction (simpleness) within the special classes of rational 
quadruple points and of sandwiched singularities. 
\end{abstract}

\maketitle

\section*{Introduction}
Simple hypersurface singularities were classified by Arnol'd
in the famous ADE list \cite{ar}. In the surface case these are exactly the
rational double points. In Giusti's list \cite{gi} of simple isolated
complete intersection singularities no surface singularities occur.
They do appear
in the classification of simple determinantal codimension two singularities
by Fr\"uhbis-Kr\"uger and Neumer \cite{FN};  the simple surface 
singularities are the rational triple points.

In this paper we address the question:

\begin{question}
What are the simple normal surface singularities?
\end{question}

Simple  means here that
there occur only finitely many isomorphism classes in the versal deformation.
The known cases suggest that simple singularities are rational. 
There exist certainly simple singularities of higher embedding dimension, 
as all two-dimensional quotient singularities are simple (they deform only in 
other quotients \cite{EV}).
But not all rational singularities are simple: already for rational quadruple 
points there can be a cross ratio involved in the exceptional divisor. 
To exclude this we want the singularity to be taut. Furthermore it is natural 
to expect that the simple ones are quasi-homogeneous.
The quasi-homogeneous taut singularities  make up the parts I, II and III in
Laufer's list \cite{la1}. Their graphs have at most
one vertex of valency three and no higher valencies. 
There is a simple characterisation in terms of rational double point and triple 
point graphs. Using it we can formulate the conjectural  answer to our 
question as follows.

\begin{conjstar}
Simple normal surface singularities are exactly those rational singularities,
whose resolution graphs can be obtained from the graphs of rational
double points and rational triple points by making (some) vertex weights
more negative.
\end{conjstar}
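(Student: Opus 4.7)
My plan splits the conjecture into its two implications and treats each with distinct tools. For the necessity direction I would proceed in two stages: first reduce to rational singularities, then analyse the admissible shapes of the dual resolution graph. For the sufficiency direction I would build on the explicit analysis of versal deformations of rational quadruple points and sandwiched singularities to attack the general case by induction.

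To show that a simple normal surface singularity must be rational, I would exploit the fact that positive geometric genus $p_g$ supplies analytic moduli. For a minimally elliptic singularity one has a $j$-invariant attached to the elliptic cycle, and for larger $p_g$ one expects higher-dimensional equisingular strata inside the base of the versal deformation. Combining Laufer's analysis of weakly elliptic singularities with upper semi-continuity of $p_g$ and the existence of equisingular deformations along which the analytic type varies, one should produce infinitely many non-isomorphic fibres of a small deformation of any non-rational singularity, contradicting simpleness.

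Once rationality is granted, the passage to the dual graph is largely combinatorial. A vertex of valency $\geq 4$, or two disjoint vertices of valency $\geq 3$, forces a cross-ratio of branches meeting a single rational exceptional curve, and this cross-ratio moves continuously in equisingular deformations. Simpleness therefore confines the graph to Laufer's taut parts I--III. Within those lists I would check by inspection that precisely the weight-lowerings of rational double point and rational triple point graphs survive, the remaining taut graphs being already ruled out either by rationality or by a residual modulus that I would identify directly from the resolution data.

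The sufficiency direction is where I expect the main obstacle. For rational quadruple points and sandwiched singularities the paper parametrises the versal deformation via an explicit ambient smoothing (the Artin component, respectively the sandwich diagram of curvettes) and sees only finitely many analytic types arise. My proposal for the general case is to induct on the number of weight-lowerings: given any graph of the conjectured form, realise the singularity on a deformation of the ambient RDP or RTP and transfer the finiteness of deformation types from the ambient singularity to our own. The difficulty is that a single RDP or RTP graph admits many inequivalent weight-lowerings, and the transfer step may introduce hidden moduli when heavier components of the exceptional set are blown down; a uniform argument avoiding extensive case analysis probably requires a systematic theory of partial smoothings for rational surface singularities parallel to Koll\'ar and Shepherd-Barron's work on P-resolutions.
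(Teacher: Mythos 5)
The statement you are proving is labelled a \emph{conjecture} in the paper, and it remains one: the paper only establishes (i) the necessity direction for \emph{rational} singularities (Proposition \ref{notsimple}), and (ii) sufficiency for the special classes of quotient singularities, rational quadruple points and sandwiched singularities, explicitly leaving types III.6, III.9 and the non-sandwiched III.4 open. Your proposal does not close the gaps. In the necessity direction, the reduction to rational singularities fails at two points. First, a rigid non-rational normal surface singularity would be simple and would falsify the statement; the (non)existence of such singularities is an old open problem which the paper points out its conjecture would settle, and your $p_g$-based argument cannot even get started on a singularity with no deformations. Second, even granting nontrivial equisingular deformations of the resolution with varying analytic type, these blow down to deformations of the singularity only when $p_g$ is constant along them, so "moduli in the resolution" do not automatically yield moduli of the singularity; the paper is explicit that the elliptic case is only an expectation. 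Your combinatorial step also misstates the mechanism: a graph with one trivalent vertex not obtainable from an RDP/RTP graph (the rational $\wt E_6$, $\wt E_7$, $\wt E_8$ types) need not itself carry a cross-ratio; the paper's actual argument (Lemma \ref{lemetilde}, via Laufer--Wahl integrally minimal collections of positive roots on the Artin component) is that such a singularity \emph{deforms into} a star-shaped configuration of five curves, and it is that adjacent singularity which has the modulus. This non-obvious adjacency is the technical heart of the necessity proof and is absent from your sketch.

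For sufficiency, your proposed "transfer of finiteness from the ambient RDP or RTP" has no mechanism behind it: a singularity $X'$ obtainable from $X$ is not a deformation of $X$, and there is no functor carrying deformations of $X'$ to deformations of $X$ outside the Artin component. This is precisely why the paper restricts to classes where the full versal base is understood --- the $B(n)$ spaces of De Jong--Van Straten for quadruple points, and decorated curves for sandwiched singularities --- and why Proposition \ref{propartin} only controls adjacencies \emph{on the Artin component}. For a general rational singularity of multiplicity at least four the versal base has many components, and finiteness of analytic types on the non-Artin components is exactly the unresolved part of the conjecture. Your plan is a reasonable road map of the difficulties, but as a proof it is incomplete in both directions.
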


A rigid singularity, one having no nontrivial
deformations at all, is certainly simple.
It is an old unsolved
question whether rigid normal surface singularities (or rigid reduced
curve singularities) exist. If they do, they are rather special. 
Our conjecture says not only that they do not exist, but even more,
 that there are no singularities
with nontrivial infinitesimal deformations, which however  all are  obstructed.

Without the condition of  normality there are rigid singularities. 
In fact, the standard
example of a nonnormal isolated singularity, two planes in $4$-space
meeting transversally in one point, is rigid and therefore simple.

The problem in studying rational singularities of multiplicity
at least four is that their deformation space has (in general) many
components, and for only one, the Artin component, one has good methods
to study adjacencies: it suffices to look at deformations of the
resolution. In the case of almost reduced fundamental cycle there is even
a complete description of the adjacencies \cite{la3}. 
For special classes we know more, mainly by the work of De Jong and Van Straten. 
This includes rational quadruple points \cite{tdq} and sandwiched  
singularities \cite{tds}.

Our results on the conjecture are restricted to rational singularities.
We prove one direction of the conjecture for rational singularities in general, that 
those not covered by the conjecture are not simple. We show that they deform 
on the Artin component to a singularity with a cross ratio on the exceptional 
set. Proving simpleness is more difficult. We succeed in the cases where 
there are good methods to study deformations.

Our result is:
\begin{propstar}
For the  following classes of rational singularities the conjecture is true, 
that is, the simple ones are those obtained from rational double and triple points:
\begin{itemize}
\item quotient singularities,
\item rational quadruple points,
\item sandwiched singularities. 
\end{itemize}
\end{propstar}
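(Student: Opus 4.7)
The plan is to establish the two inclusions separately. One direction --- that every rational singularity whose dual graph cannot be produced from an RDP or RTP graph by decreasing (some) weights fails to be simple --- is the general result already indicated: it yields an adjacency on the Artin component to a singularity carrying a cross ratio in its exceptional divisor. Consequently, for each of the three classes only the converse needs to be addressed, namely that a singularity in the class whose graph is of the conjectured form is actually simple.

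For quotient singularities the simpleness is immediate from Esnault--Viehweg \cite{EV}: every two-dimensional quotient deforms only into other quotient singularities, hence admits only finitely many isomorphism classes in its versal base. Combined with the general direction this already forces every quotient graph to be of the conjectured form, so within this class the converse statement is in effect vacuous; one can alternatively verify combinatorially from Brieskorn's list that this is so, the cyclic (chain) graphs arising from $A_n$ and the three-armed dihedral, tetrahedral, octahedral and icosahedral graphs arising, after possibly lowering weights, from the star-shaped RTP graphs.

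For rational quadruple points I would run systematically through the description of deformations supplied by De Jong--Van Straten \cite{tdq}. The versal base typically has several components, and on each the adjacencies are made combinatorially explicit in their work. The task is to check, for every quadruple-point graph admitted by the conjecture, that no component produces a continuous family of non-isomorphic singularities. Tautness of the starting singularity together with the combinatorial restriction on its graph should prevent the appearance of any modulus (i.e.\ cross ratio) in the neighbouring singularities; this amounts to a finite case analysis, but one that must be carried out on every component, not only on the Artin component.

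The most delicate case is that of sandwiched singularities. Here I would use De Jong--Van Straten's description \cite{tds} of a sandwiched singularity by a decorated plane curve germ, together with the interpretation of its deformations as picture deformations of this curve. Simpleness translates into finiteness of the number of equivalence classes of picture deformations, and the main difficulty --- which I expect to be the chief obstacle of the whole proof --- is that moduli can live on components of the base far from the Artin one, so they must be ruled out on every component simultaneously. The combinatorial constraint that at most one vertex of the graph has valency three and no vertex has higher valency should be sharp enough to bound the underlying curve configurations severely, and I would try to match each admissible graph with a short explicit list of picture deformations whose combinatorial types are finite in number.
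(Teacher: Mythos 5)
Your overall architecture (general non-simpleness direction via a star/cross-ratio adjacency on the Artin component, then simpleness class by class) matches the paper, and the quotient case is handled exactly as in the paper via Esnault--Viehweg \cite{EV}. But for the other two classes your proposal is a programme rather than a proof, and in each case it is missing the specific idea that makes the programme terminate. For rational quadruple points you propose to ``check every component'' of the De Jong--Van Straten base space; the paper instead proves (Proposition \ref{proprqp}) that for a quadruple point obtainable from a triple point there is \emph{no quadruple point on the first blow up}, so the invariant $n$ equals $1$ and the base is $B(1)$ with exactly two components. Any adjacency to another quadruple point must then occur over the intersection of the two components, i.e.\ on the Artin component, where Proposition \ref{propartin} (via the comparison of positive roots in Lemma \ref{rootslemma}) applies. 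Without this reduction the non-Artin components are not under combinatorial control, and ``tautness should prevent the appearance of any modulus'' is an expectation, not an argument.

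For sandwiched singularities the gap is similar: you invoke the decorated-curve/picture-deformation formalism of \cite{tds} but do not supply the mechanism that bounds the adjacencies. The paper's proof rests on (i) explicit decorated-curve models $(C_0,l_0)$ plus auxiliary branches for each admissible graph type (reduced fundamental cycle, III.2/III.3, the three subtypes of III.4), (ii) the lemma that for reduced fundamental cycle the number of ends of the graph cannot increase (via the condition $C_i\cdot C_j\leq\min\{l(i),l(j)\}\leq C_i\cdot C_j+1$ on groups of branches), and (iii) an induction on the number of branches whose inductive step is a concrete multiplicity count showing that resolving the deformed extra branch requires at most one additional blow-up in a smooth point, so the resulting graph is obtainable from a subgraph of the one already controlled. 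Your ``short explicit list of picture deformations'' would have to reproduce something equivalent to (ii) and (iii); as written, the finiteness of combinatorial types is asserted rather than derived, and the valency constraint on the graph by itself does not bound the deformations of the decorated curve.
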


As singularities of type Laufer's III.5 (see Table \ref{tableA}) are deformations
of those of type III.6, and type III.7 and III.8 deformations of type 
III.9, it remains to prove that singularities with graph of type
III.9, of type III.6 and those of type III.4, which are not sandwiched, are simple.

The contents of this paper is as follows. We first recall the concepts of 
simpleness and modality. Then we discuss and state the conjecture, in 
particular why rationality is to be expected. We use the results of
Laufer \cite{la3} and Wahl \cite{wa} to determine adjacencies on the Artin 
component, giving one direction of the conjecture for rational singularities.
Thereafter we show that the conjecture holds for rational quadruple points.
The last section proves the result for sandwiched singularities. 

\section{modality}
Modality was introduced by Arnol'd in connection with the classification of 
singularities of functions under right equivalence.
It has been generalised to arbitrary actions of algebraic groups by Vinberg
\cite{vi}. Wall \cite{wall} described two possible generalisations for use 
in other classification problems in  singularity theory.

The first one is developed in detail by Greuel and Nguyen
\cite{GD}. Let an algebraic group $G$ act on an algebraic variety
$X$. Consider a \textit{Rosenlicht stratification} of $X$, that is
a stratification with locally closed $G$-invariant subvarieties $X_i$,
for which a geometric quotient $X_i/G$ exists. The \textit{modality}
$\moda(X,G)$ of the action is the maximal dimension of the quotients $X_i/G$.
For an open subset  $U$ the modality $\moda(U,G)$ is the maximal
dimension of the images of  $U\cap X_i$ in $X_i/G$. Finally,
the modality $\moda((X,x),G)$ of a point $x\in X$ is the minimum
of  $\moda(U,G)$ over open neighbourhoods $U$ of $x$. The modality is 
independent of the Rosenlicht stratification used.
One obtains right and contact modality of a germ by passing to the space
of $k$-jets, for sufficiently large $k$.

It is not obvious how to use this approach for modality of general
singularities, which are not complete intersections.
Wall's second description starts from the versal deformation 
$X_S\to S$ of a given singularity $X_0$.
The \textit{modality} is the
largest number $m$ such that in any neighbourhood of $0\in S$ there exists an 
$m$-dimensional analytic subset such that at most finitely many points
represent analytically isomorphic singularities.

The modality considered here is sometimes referred to as outer modality.
For right equivalence Gabrielov \cite{gab} has shown that outer modality and 
inner modality coincide, where 
inner or proper modality is the dimension of the $\mu$-constant
stratum in the semi-universal unfolding. For contact equivalence of
hypersurface germs the 
inner modality is in general smaller than the outer modality.
For general deformations one can use as inner modality the
dimension of the  \textit{modular stratum}, as introduced by
Palamodov \cite{pal}: the maximal subgerm of the base of the versal
deformation, over which the deformation is universal.

A singularity is \textit{simple} if it is 0-modal.  By the above discussion
this means the following.

\begin{defn}
A singularity is \textit{simple} if it only can deform into finitely many 
different isomorphism classes of singularities.
\end{defn}

A singularity is not simple if it deforms into a singularity with positive
inner modality, that is, a singularity with moduli.

\section{The conjecture}
The simple 2-dimensional hypersurface singularities are  the
rational double points. 
No complete intersection surface singularity, which is not a hypersurface, is simple: 
such a singularity deforms always into the intersection of two quadrics,
so in a simple elliptic singularity of type $\wt D_5$.
In the classification of simple determinantal codimension two singularities
by Fr\"uhbis-Kr\"uger and Neumer \cite{FN} the  surface 
singularities are the rational triple points; in particular non-rational 
singularities of this type are not simple.

Minimally elliptic singularities deform into simple elliptic singularities of the
same multiplicity \cite{ka}, and are therefore never simple. A general
elliptic singularity has on the resolution a minimally elliptic cycle, such that
its complement consists of rational cycles. We expect that a deformation
exists to a singularity with an elliptic curve as minimally elliptic cycle and
the same complementary rational cycles.  This would imply that elliptic 
singularities are never simple.
From Laufer's classification of taut and pseudotaut singularities \cite{la1} it
follows that a  \lq general\rq\ non-rational singularity  has moduli.
To explain what  \lq general\rq\ means we first recall the definition of
(pseudo)taut.

\begin{defn}
A normal surface singularity is \textit{taut}, if every other singularity
with the same resolution graph, is isomorphic to it. A singularity is 
\textit{pseudotaut} if there are only finitely many isomorphism classes of 
singularities with the same resolution graph.
\end{defn}
The only non-rational (pseudo)taut singularities are the minimally elliptic 
singularities whose graph is a Kodaira graph, and those are not simple.

Actually, Laufer defines a singularity as pseudotaut if there are  countably 
many isomorphism classes with the same graph, but he proves that there 
are then only  finitely many, as result of a more precise
description. 
Given a pseudotaut graph,  all analytic types
occur in a certain versal deformation:
construct by the standard plumbing construction a manifold $M$ 
with exceptional divisor $E$, and consider deformations of the
pair $(M,E)$, which are locally trivial deformations of $E$. 
For the general fibre all such deformations are in fact trivial.
Conversely, any singularity whose resolution can be deformed
in this way into one with only trivial such deformations, is 
pseudotaut. 
For all other singularities the resolution has moduli. This does not imply that 
the singularity itself has moduli, as deformations of the resolution blow 
down to deformations of the singularity if and only if the geometric genus 
$p_g$ is constant. So   \lq general\rq\  has to mean that $p_g$ has 
the lowest possible value.

We conjecture that simple singularities are rational. As such singularities are always
smoothable, the conjecture says in particular that no rigid normal surface singularities 
exist. Rigid  singularities, that is singularities with no deformations at all, not even 
infinitesimally,  are trivially simple.  In fact, the (non)-existence of rigid
normal surface singularities is an old open problem.

But not all rational singularities are simple. Already for quadruple points we 
find singularities with a modulus in the resolution. An example is the $n$-star 
singularity of \cite{tdq}, which has a star-shaped graph with a central
$(-4)$-vertex and four arms of $(-2)$'s of equal length $n-1$. On the other 
hand, there exist simple singularities of arbitrary multiplicity, as quotient 
singularities are simple: they deform only into other quotient singularities \cite{EV}.

What quotient singularities and rational triple points have in common, is 
that they are taut.
For rational singularities deformations of the resolution with the same
graph blow down to equisingular deformations of the singularity.
Therefore pseudotautness is a necessary condition for simplicity. 
Without using the classification we show below (Proposition \ref{notsimple})
that pseudotaut but not taut rational singularities are not simple.
From the same Proposition it follows that simple rational surface singularities are 
quasi-homogeneous. 

\begin{conj}
A  normal surface singularity is simple if and only if it is taut and
quasi-homogeneous.
\end{conj}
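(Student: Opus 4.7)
I would split the biconditional and attack the two directions with very different tools.

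For the direction \emph{simple} $\Rightarrow$ \emph{taut and quasi-homogeneous}, my plan is to proceed in three layers. First, show that a simple normal surface singularity is rational. The paper has already indicated the expected route: no complete intersection surface singularity deforms past $\widetilde D_5$, Fr\"uhbis-Kr\"uger/Neumer kills the non-rational determinantal case, Karras deforms minimally elliptic germs into simple elliptic ones, and Laufer's (pseudo)taut classification isolates all remaining non-rational taut cases as Kodaira-type minimally elliptic; everything outside those lists acquires moduli already on the resolution, and I would descend those moduli to the singularity by arguing that a suitable deformation keeps $p_g$ constant. Second, once rationality is granted, I would use that for rational singularities equisingular deformations of the resolution blow down, so simpleness forces pseudotautness. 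Third, I would invoke the forward-referenced Proposition \ref{notsimple} twice: to remove the pseudotaut-but-not-taut cases, and to remove the taut-but-not-quasi-homogeneous cases. This leaves exactly the conjectured class.

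For the converse \emph{taut and quasi-homogeneous} $\Rightarrow$ \emph{simple}, I would first translate the hypothesis into graph-theoretic form using Laufer's parts I, II, III: the resulting graphs have at most one vertex of valency three, and a direct combinatorial check identifies each as a graph obtained from a rational double or triple point graph by making some vertex weights more negative. That identifies the Conjecture of this section with the \emph{Conjecture (star)} of the introduction. Simpleness is then verified class by class: multiplicity $\le 3$ by ADE and the rational triple point classification, quotient singularities by Esnault-Viehweg, sandwiched singularities and rational quadruple points by the de Jong-van Straten machinery cited as \cite{tds,tdq}. The residual cases (Laufer types III.4 non-sandwiched, III.6, III.9) have to be treated by hand; for these the strategy is to compare all components of the versal base to the Artin component, where Laufer's and Wahl's adjacency description shows that one never leaves the \textbf{taut quasi-homogeneous} class.

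The main obstacle, and the reason this is only a conjecture, is concentrated at two points. The more serious is the step \emph{simple} $\Rightarrow$ \emph{rational}: it subsumes the long-standing open problem of excluding rigid normal surface germs, and strengthens it by demanding that every non-rational germ carry a \emph{moduli-producing} deformation, not merely some nontrivial deformation. The second obstruction is on the converse side, for the residual graphs listed in the \textbf{Proposition (star)} summary: for these one lacks a global description of the versal deformation outside the Artin component, so producing simpleness (as opposed to simpleness restricted to the Artin component) will require either new smoothing techniques or a case-by-case analysis of the non-Artin components, and this is where I expect most of the work to lie.
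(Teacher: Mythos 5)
This statement is a \emph{conjecture}; the paper offers no proof of it, only partial results, and your proposal correctly reconstructs exactly that program: Proposition \ref{notsimple} (via the star graphs of Lemma \ref{lemetilde}) for the ``only if'' direction restricted to rational singularities, and the quotient / rational quadruple point / sandwiched cases for the ``if'' direction, with types III.6, III.9 and non-sandwiched III.4 left open. You also correctly locate the two genuine gaps --- ``simple $\Rightarrow$ rational,'' which subsumes the open rigidity problem, and the non-Artin components of the versal base for the residual graphs, where Proposition \ref{propartin} says nothing --- so your sketch is an accurate account of the state of the problem rather than a proof, which is the most that can be said to match the paper.
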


\begin{remark}
Quasi-homogeneity is a feature of many lists of simple objects, but in each 
case it is the result of the classification. In fact, the list of simple map germs 
$(\C,0)\to (\C^2,0)$ \cite{BrGa} contains also germs which are not 
quasi-homogeneous.
\end{remark}

The quasi-homogeneous taut singularities make up the parts I, II and III in
Laufer's list of graphs \cite{la1}.
We give the list in Table \ref{tableA}. It is 
organised as to have no duplicates. The meaning of the symbols is the
following.
A dot \bp(0.23,0.23)\put(0.12,0.12)\cir\ep\ denotes a vertex of any 
weight $-b\leq -2$, a dot\quad
$\ci{-2}$\quad
has weight exactly $-2$, whereas a square \rule{8pt}{8pt} is  a vertex  
of any weight $-b\leq -3$, less than $(-2)$. A chain
\bp(2,0)\put(0,0.12){\keten}
\put(2,0.12){\cir} 
\ep\quad is a chain of vertices of any length $k\geq 0$. So the first entry
of the Table gives exactly the cyclic quotient singularities, with  
the cone over a rational normal curve being the case $k=0$. The second entry 
gives all other quasi-homogeneous taut singularities with reduced fundamental 
cycle.

\begin{table}\caption{Quasi-homogeneous taut surface singularities}\label{tableA}
\begin{list}{}{}
\item[I, II] 
\[
\bp(2,0)(0,-.5)
\put(0,0){\cir}  
\put(0,0){\keten}
\put(2,0){\cir} 
\ep
\]
\item[III.1] 
\[
\bp(6,2.5)(0,-2.7)
\put(0,0){\cir}  \put(0,0){\keten}
\put(2,0){\cir} \put(2,0){\lijn}
\put(3,0){\vir}  
\put(3,0){\lijn}\put(4,0){\cir}\put(4,0){\keten}\put(6,0){\cir}
\put(3,0){\line(0,-1){1}} 
\put(3,-1){\cir}\put(3,-1){\ketvert}
\put(3,-2.6){\cir}
\ep
\]
\item[III.2] 
\[
\bp(4,1)(0,-1)
\put(0,0){\cir}  \put(0,0){\lijn}
\put(1,0){\ci{-2}}  
\put(1,0){\lijn}\put(2,0){\cir}\put(2,0){\keten}\put(4,0){\cir}
\put(1,0){\line(0,-1){1}} 
\put(1,-1){\cir}
\ep
\]

\item[III.3] 
\[
\bp(8,1.2)(0,-1.2)
\put(0,0){\cir}  \put(0,0){\keten}
\put(2,0){\cir} \put(2,0){\lijn}
\put(3,0){\vir}  \put(3,0){\lijn}
\put(4,0){\ci{-2}}  
\put(4,0){\lijn}\put(5,0){\cir}
\put(5,0){\lijn}\put(6,0){\cir}
\put(6,0){\keten}\put(8,0){\cir}
\put(4,0){\line(0,-1){1}} 
\put(4,-1){\cir}
\ep
\]
\item[III.4] 
\[
\bp(6,1.2)(0,-1.2)
\put(0,0){\cir}  \put(0,0){\lijn}
\put(1,0){\ci {-2}} \put(1,0){\lijn}
\put(2,0){\ci  {-2}}  \put(2,0){\lijn}
\put(3,0){\ci {-2}}  \put(3,0){\lijn}
\put(4,0){\cir}  
\put(4,0){\keten}\put(6,0){\cir}
\put(2,0){\line(0,-1){1}} \put(2,-1){\vir}
\ep
\]

\item[III.5] 
\[
\bp(7,1.2)(0,-1.2)
\put(0,0){\cir}  \put(0,0){\lijn}
\put(1,0){\ci {-2}} \put(1,0){\lijn}
\put(2,0){\ci  {-2}}  \put(2,0){\lijn}
\put(3,0){\ci {-2}}  \put(3,0){\lijn}
\put(2,0){\line(0,-1){1}} \put(2,-1){\cirr  {-2}}
\put(4,0){\vir}  \put(4,0){\lijn}
\put(5,0){\cir}\put(5,0){\keten}\put(7,0){\cir}
\ep
\]

\item[III.6] 
\[
\bp(8,1.2)(0,-1.2)
\put(0,0){\cir}  \put(0,0){\lijn}
\put(1,0){\ci {-2}} \put(1,0){\lijn}
\put(2,0){\ci  {-2}}  \put(2,0){\lijn}
\put(3,0){\ci {-2}}  \put(3,0){\lijn}
\put(4,0){\ci {-2}}  \put(4,0){\lijn}
\put(2,0){\line(0,-1){1}} \put(2,-1){\cirr  {-2}}
\put(5,0){\vir}  \put(5,0){\lijn}
\put(6,0){\cir}\put(6,0){\keten}\put(8,0){\cir}
\ep
\]
\item[III.7] 
\[
\bp(4,1.2)(0,-1.2)
\put(0,0){\cir}  \put(0,0){\lijn}
\put(1,0){\ci {-2}} \put(1,0){\lijn}
\put(2,0){\ci  {-2}}  \put(2,0){\lijn}
\put(3,0){\ci {-2}}  \put(3,0){\lijn}
\put(4,0){\cir}  
\put(2,0){\line(0,-1){1}} \put(2,-1){\cirr  {-2}}
\ep
\]
\item[III.8] 
\[
\bp(5,1.2)(0,-1.2)
\put(0,0){\cir}  \put(0,0){\lijn}
\put(1,0){\ci {-2}} \put(1,0){\lijn}
\put(2,0){\ci  {-2}}  \put(2,0){\lijn}
\put(3,0){\ci {-2}}  \put(3,0){\lijn}
\put(4,0){\ci {-2}}  \put(4,0){\lijn}
\put(2,0){\line(0,-1){1}} \put(2,-1){\cirr  {-2}}
\put(5,0){\cir}  
\ep
\]
\item[III.9] 
\[
\bp(6,1.2)(0,-1.2)
\put(0,0){\cir}  \put(0,0){\lijn}
\put(1,0){\ci {-2}} \put(1,0){\lijn}
\put(2,0){\ci  {-2}}  \put(2,0){\lijn}
\put(3,0){\ci {-2}}  \put(3,0){\lijn}
\put(4,0){\ci {-2}}  \put(4,0){\lijn}
\put(2,0){\line(0,-1){1}} \put(2,-1){\cirr  {-2}}
\put(5,0){\ci  {-2}}  \put(5,0){\lijn}
\put(6,0){\cir}
\ep
\]
\end{list}
\end{table}

Using the characterisation of the graphs in the list, already observed by 
Laufer, we get the  equivalent form of our conjecture, which was  stated
in the Introduction.

\begin{conj}
Simple normal surface singularities are exactly those rational singularities,
whose resolution graphs can be obtained from the graphs of rational
double points and rational triple points by making (some, or none) vertex 
weights more negative.
\end{conj}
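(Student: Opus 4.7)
The statement is an equivalent reformulation of the preceding conjecture (simple iff taut and quasi-homogeneous) in purely combinatorial, graph-theoretic terms. Given the reduction, argued elsewhere in the paper, of simpleness to rationality and pseudotautness, together with the identification of the rational taut quasi-homogeneous graphs with Laufer's types~I, II, and~III (Table~\ref{tableA}), the substantive content of the statement is the following combinatorial identity: Laufer's graphs are exactly the rational graphs obtainable from an RDP or RTP graph by making a subset of vertex weights strictly more negative.

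I would verify this identity by direct inspection of Laufer's list. Types~I and~II are chains, and any chain with weights $\le -2$ arises from the $A_n$-graph (all weights $-2$) by decreasing some weights; conversely every such chain is the graph of a rational cyclic quotient. For the star types~III.1--III.9 I would proceed case by case: reduce every free vertex weight to $-2$ and identify the resulting $(-2)$-graph as the graph of a rational double point ($D_n$, $E_6$, $E_7$, or~$E_8$) or of one of the rational triple points in the Artin--Tjurina classification. For instance, III.1 (with central square of weight $\le -3$) reduces to an RTP graph with a central $(-3)$-vertex and three unrestricted arms, while III.9 reduces to~$E_8$. The converse direction is then bookkeeping: every RDP or RTP graph has at most one vertex of valency three and no higher valency, so any graph obtained from it by weight decrease remains a chain or a three-armed star, and matching arm-length patterns against Laufer's list exhausts the possibilities.

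The main obstacle is that not every weight decrease of an RDP or RTP graph preserves rationality; for example, lowering the central vertex of the $E_8$ graph to $-3$ violates Artin's fundamental cycle criterion. The delicate part of the verification is therefore to check that Laufer's prescription of which vertex weights are forced to remain at $-2$ in types~III.1--III.9 coincides precisely with the rationality constraints imposed on weight decreases of the underlying RDP or RTP graph. This is a finite case-by-case check, but it must be carried out with some care to avoid off-by-one errors in arm length when matching specific triple-point graphs to specific Laufer types.
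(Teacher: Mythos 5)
The statement you are asked about is labelled a conjecture, and the paper does not prove it; its only ``proof'' is the observation (attributed to Laufer) that the graphs in Table \ref{tableA} are exactly the rational graphs obtainable from rational double and triple point graphs by decreasing weights, which turns the conjecture ``simple $\Leftrightarrow$ taut and quasi-homogeneous'' into the present form. Your framing --- reduce to the earlier conjecture and then verify the combinatorial identity by inspection of Laufer's list --- is therefore the same route as the paper's. Note, however, that the reduction of simpleness to rationality is itself only conjectured in the paper (the non-existence of rigid normal surface singularities is open), and sufficiency is proved only for quotient, quadruple and sandwiched singularities; so nothing here is a proof of the statement, only of the equivalence of its two formulations.

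There is a concrete error in the part you do carry out. You claim that ``not every weight decrease of an RDP or RTP graph preserves rationality,'' with the example that lowering the central vertex of $E_8$ to $-3$ violates Artin's criterion. This is false. Decreasing self-intersection numbers always preserves rationality: for any cycle $D=\sum d_iE_i$, replacing $E_i^2$ by $E_i^2-\beta_i$ changes $D\cdot(D+K)$ by $\sum\beta_i d_i(1-d_i)\le 0$, so $p_a(D)$ can only decrease and Artin's criterion $p_a(D)\le 0$ for all $D>0$ persists. In your example the resulting star (central $-3$, three arms of $(-2)$'s of lengths $1,2,4$) has reduced fundamental cycle $Z$ with $Z^2=-3$ and $p_a(Z)=0$; it is itself a rational triple point and appears in Laufer's list as type III.1. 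Consequently the ``delicate part'' you identify --- matching the forced $(-2)$'s in types III.2--III.9 against rationality constraints --- is not where the difficulty lies. The forced $(-2)$'s encode \emph{tautness}, not rationality: lowering one of them either produces a graph that reappears under another Laufer type (e.g.\ lowering the next-to-last vertex of the long arm of III.9 yields a graph of type III.6, still obtainable from a triple point), or produces a rational graph containing a subgraph of type $\widetilde E_6$, $\widetilde E_7$ or $\widetilde E_8$ as in Table \ref{tableB}, which is rational but carries moduli and is excluded from the taut list. The case analysis that actually needs care is the one in the proof of Proposition \ref{notsimple}: every rational graph with a single valency-three vertex that is not obtainable from a double or triple point contains such a $\widetilde E_k$-type subgraph. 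Your proposal does not address this dichotomy, so the ``converse direction is then bookkeeping'' step is where the real content is hiding.
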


We will say shortly that the singularities are \textit{obtainable} from 
rational double and triple points.
As the singularities under consideration are taut, their analytic structure is 
determined by the graph. For general singularities we get a relation between 
analytic structures by the following construction.

\begin{defn}
A singularity $Y$ is \textit{obtainable} from a singularity $X$, if the 
exceptional set on its minimal  
resolution $\wt Y$ is the strict transform of the exceptional set $E$ of the
minimal resolution $\wt X\to X$ under a blow up $\wt X' \to \wt X$ 
in smooth points of $E$.
\end{defn}

In this construction we allow that points are infinitely near, that is, we 
blow up successively in smooth points of the strict transform of $E$. 
In the terminology of \cite{le} the singularity $Y$ is a \textit{sandwiched 
singularity} relative to $X$, but of a special form, in that $X$ and  
$Y$ have the same underlying graph.

\comment
 A counterexample to the conjecture might be found
from a curve of high genus with a very special line bundle.
The Brill-Noether number $\rho =g-(r+1)(g-d+r)$ is the expected dimension
of the space $W_d^r(C)$ of complete linear systems of degree $d$ and 
dimension $r$ on a curve $C$ of genus $g$ (see e.g.\ \cite{ACGH}), and 
if this number is negative, only curves which are special in moduli 
have a $g_d^r$, but not much is then known about non-emptiness and 
dimension. In particular, if $\rho=3-3g$, then the number of curves together  
with a $g_d^r$ might be finite. Contracting the zero-section of the dual of  a
corresponding line bundle would then give a quasi-homogeneous surface 
singularity, which could be simple.
Note that by a 
result of Wahl \cite{wa} non-rational quasi-homogeneous singularities
always have a  positive dimensional equisingular stratum. For the putative 
example all equisingular deformations would have strictly positive weight.
\endcomment

\section{Adjacencies on the Artin component}
The deformation space of a rational surface singularity
has a special component, of largest dimension, over which
a simultaneous resolution exists (after base change).
Therefore the adjacencies on this component can be found from
deformations of the resolution. These were studied by 
Laufer and Wahl \cite{la3,wa}.

Let $X_T \to T$ be a one-parameter deformation with 
simultaneous resolution $\wt X_T\to T$. Consider an
irreducible component $E_T\to T$ of the exceptional set,
mapping surjectively onto $T$. Then $E_T$ is flat and proper
over $T$. The fibre $E_t$ for $t\neq0$ is irreducible
\cite[Thm 2.1]{la3}; in fact, if there are several 
components, there is monodromy and we find two components,
which are homologous in $\wt X_T$, contradicting the
negative definiteness of the intersection form on $X_t$.
In the family $E_t^2$ and $p_a(E_t)$ are constant; here
$p_a(D)$ is the arithmetic genus of a cycle $D$, which can be computed
with the adjunction formula. The 
divisor $E_t$ specialises for $t=0$ to a cycle $D$ on the
exceptional set of $X_0$. We say that $D$ \textit{lifts} to the
deformation $\wt X_T\to T$. Laufer and Wahl give sufficient
conditions on cycles $D$ for the existence of a deformation to
which $D$ lifts. To formulate the result we need some definitions.

\begin{defn}[\cite{wa}]
A cycle $D>0$  on the minimal resolution
of a rational singularity is a \textit{positive root}
if  $p_a(D)=0$.
\end{defn}

For the ADE singularities
the positive roots correspond  exactly to  the positive roots
of the root system of the same type  A, D or E under the identification of the 
resolution graph with the Dynkin diagram.

\begin{lemma}[{\cite[Lemma 3.1]{la3}}]
A cycle $D>0$ is a positive root if and only if $D$ is part of 
a computation sequence.
\end{lemma}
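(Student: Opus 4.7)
The plan is to exploit the arithmetic-genus identity
\[
p_a(A+B) = p_a(A) + p_a(B) + A\cdot B - 1,
\]
together with Artin's characterisation of rationality (every effective cycle $C>0$ on the resolution satisfies $p_a(C) \leq 0$) and the fact that each exceptional component $E_i$ is a smooth rational curve with $p_a(E_i) = 0$. These three ingredients will force the inequalities that arise at each step of the argument to collapse to equalities.

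For the forward direction, suppose $D = Z_j$ appears in some computation sequence $0 = Z_0, Z_1 = E_{n(1)}, Z_2, \ldots$ with $Z_i = Z_{i-1} + E_{n(i)}$ and $Z_{i-1}\cdot E_{n(i)} \geq 1$ for $i \geq 2$. I would induct on $i$: the genus formula gives $p_a(Z_i) = p_a(Z_{i-1}) + Z_{i-1}\cdot E_{n(i)} - 1 \geq p_a(Z_{i-1})$, while rationality forces $p_a(Z_i) \leq 0$. Starting from $p_a(Z_1) = p_a(E_{n(1)}) = 0$, both inequalities are pinned to equalities at every step; in particular $p_a(Z_i) = 0$ for all $i$ (and, as a byproduct, $Z_{i-1}\cdot E_{n(i)} = 1$), so $p_a(D) = 0$ and $D$ is a positive root.

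For the converse, given a positive root $D > 0$, I would build a computation sequence terminating at $D$ by greedy induction. Choose $E_{n(1)}$ in the support of $D$ and set $Z_1 = E_{n(1)}$; then $p_a(Z_1) = 0$ and $Z_1 \leq D$. Inductively, suppose $0 < Z_j < D$ with $Z_j \leq D$ and $p_a(Z_j) = 0$. Applying the genus formula to the decomposition $D = Z_j + (D - Z_j)$ gives
\[
0 = p_a(D) = p_a(Z_j) + p_a(D - Z_j) + Z_j\cdot(D - Z_j) - 1,
\]
so $Z_j\cdot(D - Z_j) = 1 - p_a(D - Z_j) \geq 1$ by rationality. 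Expanding this intersection number as a nonnegative-coefficient sum over the components of $D - Z_j$ forces some $E_i$ in the support of $D - Z_j$ to satisfy $Z_j\cdot E_i \geq 1$. Set $Z_{j+1} = Z_j + E_i \leq D$; the genus formula together with rationality pins $Z_j\cdot E_i = 1$ and $p_a(Z_{j+1}) = 0$, so the induction continues. The sequence strictly increases and terminates at $Z_k = D$. If the statement is to be read as requiring continuation all the way to the fundamental cycle $Z$, the same argument, applied now with $Z$ in place of $D$ starting from $Z_k = D$, extends the sequence beyond $D$.

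The argument is almost entirely formal once the three ingredients are in hand. The one point demanding attention is the inductive step of the converse: one must verify that the inequality $Z_j\cdot(D - Z_j) \geq 1$ actually delivers an $E_i$ in the support of $D - Z_j$ on which $Z_j$ meets $E_i$ positively, so that the new term $Z_{j+1} = Z_j + E_i$ stays bounded above by $D$ and remains a positive root. The nonnegativity of the coefficients of $D - Z_j$, combined with the strict positivity of the total sum, is precisely what supplies this $E_i$.
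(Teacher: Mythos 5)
Your proof is correct and follows essentially the same route as the paper: the forward direction is the identical genus-monotonicity argument pinned by rationality, and your converse is the paper's maximality-plus-contradiction argument recast as a greedy induction, using the same decomposition $D = Z_j + (D-Z_j)$, the same adjunction identity, and the same appeal to $p_a \leq 0$ for all effective cycles. The only cosmetic difference is that the paper extracts the contradiction from a maximal $C\leq D$ in a computation sequence, while you show directly that the sequence can always be extended; these are logically interchangeable.
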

\begin{proof}
In a computation sequence the genus cannot go down:
let $Z_{j+1}=Z_j+E_{i(j)}$ be a step in the sequence, where
$Z_j\cdot E_{i(j)}>0$, then
\[
p_a(Z_{j+1})= p_a(Z_j)+p_a(E_{i(j)})+Z_j\cdot E_{i(j)}-1
\geq p_a(Z_j)\;.
\] 
So for a rational singularity $p_a(Z_j)=0$ in
every step.\\
Conversely, let $C\leq D$ be a cycle, which is part of a 
computation sequence and is maximal with respect to this property.
This implies that  $C\cdot E_i\leq 0$ for every $E_i$ in the
the support of $D-C$. For every cycle $A$ one has
$p_a(A)\leq 0$. If the support of $D-C$ is not empty,
then 
\[
0=p_a(D)=p_a(C)+p_a(D-C)+C\cdot (D-C)-1 <0\;.
\]
This contradiction shows that $C=D$.
\end{proof}

\begin{defn}
A cycle $D=\sum d_iE_i$ on the exceptional set of
the minimal resolution of a normal surface singularity is
\textit{almost reduced}, if it is reduced at the non-$(-2)$'s,
i.e., $d_i=1$ if  $E_i^2<-2$.
\end{defn}

This condition is important, as it implies  vanishing of the
obstructions to deform $D$, which lie in
$H^1(\cT^1_D)$, because
$h^1(\cT^1_D)=h^1(\sier{D-D_\text{red}}(D))
=(D-D_\text{red})\cdot K$
\cite[(2.14)]{wa}.

\begin{defn}[{\cite[Definition 3.5]{la3}}]
A collection of cycles $D_1$, \dots, $D_m$ on the exceptional set 
the minimal resolution of a rational surface singularity is 
\textit{integrally minimal}, if  each $D_i$ is a positive root,
the cycle $D=D_1+ \dots, +D_m$ is almost reduced, and 
no other collection $C_1$, \dots, $C_m$ of $m$ positive
roots can generate the $D_i$ with non-negative integral coefficients.
\end{defn}

\begin{theorem}\label{thmdefo}
Let $D_1$, \dots, $D_m$ be an integrally minimal collection
of cycles on the minimal resolution $\wt X_0$
of a rational surface singularity
with $D=\sum D_i$ itself a positive root. Then there exists
a 1-parameter deformation $ \wt X_T \to T$ such that the exceptional set 
$E_t$ of $\wt X_t$, $t\neq 0$, has a decomposition in $m$
irreducible components $E_{t,i}$ with $E_{t,i}$ homologous
to $D_i$ in $\wt X_T$.\\
If the fundamental cycle $Z$ of  $\wt X_0$ is itself
almost reduced, then every adjacency arises this way.
\end{theorem}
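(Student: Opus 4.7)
The plan is to apply the deformation theory of non-reduced cycles on smooth surfaces developed by Laufer and Wahl, combining the unobstructedness result for almost reduced cycles with the existence of a simultaneous resolution to produce the required one-parameter family.

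For existence, I would begin by noting that the sheaf-theoretic obstruction space $H^1(\cT^1_D)$ vanishes: by Wahl's formula $h^1(\cT^1_D) = (D - D_{\mathrm{red}}) \cdot K$, and since $D$ is almost reduced, $D - D_{\mathrm{red}}$ is supported on $(-2)$-curves along which $K$ is numerically trivial. Combined with $p_a(D) = 0$, this lets one smooth $D$ inside a one-parameter deformation $\wt X_T \to T$ of $\wt X_0$. Each $D_i$ is itself a positive root and, as a summand of the almost reduced $D$, is almost reduced, so its own smoothing obstructions vanish; one arranges the global deformation so that each $D_i$ simultaneously smooths to an irreducible smooth rational curve $E_{t,i}$ in $\wt X_t$. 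The integral minimality hypothesis then rules out any strictly finer splitting of the smoothed curve, for such a splitting would exhibit a collection of positive roots generating the $D_i$ contrary to minimality.

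For the converse, assume $Z$ is almost reduced and let $\wt X_T \to T$ be any one-parameter deformation realizing an adjacency on the Artin component. The reduced exceptional divisor over the generic fiber decomposes as $E_t = \bigcup E_{t,i}$ with each $E_{t,i}$ an irreducible smooth rational curve (rationality of $\wt X_t$ follows from upper semi-continuity of $p_g$ together with the simultaneous resolution). By properness each $E_{t,i}$ specializes to a cycle $D_i$ on the exceptional set of $\wt X_0$, and constancy of arithmetic genus in flat families gives $p_a(D_i) = 0$, so $D_i$ is a positive root. The sum $D = \sum D_i$ is the specialization of $E_t$; the hypothesis that $Z$ is almost reduced ensures that the multiplicity of $D$ at each non-$(-2)$ vertex is at most one, so $D$ is almost reduced. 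Integral minimality of the $\{D_i\}$ is automatic, since any strictly finer generating family of positive roots would already manifest as a reducibility of some $E_{t,i}$ on the generic fiber.

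The main obstacle I anticipate is the converse bookkeeping, specifically verifying that the specialization $D$ of the generic exceptional divisor is almost reduced under the hypothesis on $Z$. This is the step where the assumption really bites, and it requires careful tracking of how components of the generic exceptional set can coalesce as $t \to 0$ in the simultaneous resolution; one expects to compare $D$ to $Z$ componentwise via the structure of the total space $\wt X_T$. The existence direction, by contrast, is largely a formal application of the Laufer--Wahl vanishing of obstructions plus a standard deformation-theoretic construction, once one has the key equality $h^1(\cT^1_D) = 0$.
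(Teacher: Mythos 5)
Your proposal attempts to reprove the theorem from first principles, whereas the paper's proof is essentially a citation: the statement is Laufer's Theorem 3.12 in \cite{la3}, together with the observation that Laufer's standing hypothesis (almost reduced fundamental cycle) is needed only for the converse, the existence of one integrally minimal collection sufficing for the existence direction. Measured against what an actual proof requires, your sketch has genuine gaps in both directions. In the existence direction, the sentence ``one arranges the global deformation so that each $D_i$ simultaneously smooths to an irreducible smooth rational curve'' is not a step but the entire content of the theorem: the vanishing $h^1(\cT^1_D)=(D-D_{\text{red}})\cdot K=0$ only removes the obstruction to deforming the cycle $D$; it does not by itself produce an ambient deformation $\wt X_T\to T$ of the resolution whose nearby exceptional set decomposes in the prescribed way, and that construction is precisely the hard part of Laufer's and Wahl's work. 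Moreover your use of integral minimality does not match its definition, which forbids another collection of exactly $m$ positive roots generating the $D_i$ with non-negative integral coefficients: a ``strictly finer splitting'' of some $E_{t,i}$ would produce more than $m$ roots, and conversely a non-minimal collection need not manifest as reducibility of a generic component, since the homological identity $[E_{t,i}]=\sum_j a_{ij}[C_j]$ does not force $E_{t,i}$ to decompose.

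In the converse direction the step you yourself flag as the main obstacle is indeed an unclosed gap: to deduce that $D=\sum D_i$ is almost reduced from the hypothesis on $Z$ you need $D\le Z$, or at least that $D$ is reduced along the non-$(-2)$-curves, and no argument is given --- the specialization of the reduced exceptional divisor of the generic fibre is not a priori dominated by the fundamental cycle of the special fibre. Likewise ``integral minimality of the $\{D_i\}$ is automatic'' is asserted, not proved. These are exactly the points where Laufer's proof does real work, so the proposal cannot stand on its own; a complete argument must either reproduce that analysis or, as the paper does, invoke \cite[Thm 3.12]{la3} and check that the almost-reduced-fundamental-cycle hypothesis enters only in the converse.
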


\begin{proof}
The result is contained in \cite[Thm 3.12]{la3}. Note that Laufer
has  almost reduced fundamental cycle as assumption, but his
proof shows that the existence of an integrally minimal collection
is sufficient for the  existence of a deformation.
\end{proof}

Some adjacencies, which Laufer calls \textit{reduced}, are particularly simple. 
In terms of the resolution graph they are the following.
First of all, one can take a (connected) subgraph of a given
graph. One can also replace two intersecting
curves $E_a$ and $E_b$ of self-intersection $-a$ and $-b$, with 
one curve with same self-intersection as $E_a+E_b$, that is
$-(a+b-2)$.  This means smoothing the double  point of the
exceptional divisor at the intersection point of  $E_a$ and $E_b$.

\begin{prop}\label{notsimple}
A rational singularity, which is not obtainable from a rational double or
triple point, is not simple.
It deforms into a singularity with a modulus in the exceptional divisor,
with\/ {\rm(}unweighted\/{\rm)} graph of the  form:
\[
\begin{picture}(2,2)(-1,-1)
\put(0,0){\line(1,0){1}}
\put(0,0){\line(-1,0){1}}
\put(0,0){\line(0,1){1}}
\put(0,0){\line(0,-1){1}}
\put(0,0){\cir}
\put(1,0){\cir}
\put(-1,0){\cir}
\put(0,1){\cir}
\put(0,-1){\cir}
\end{picture}
\]
\end{prop}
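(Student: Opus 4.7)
The plan has two parts. First, one identifies, combinatorially, which resolution graphs are \emph{obtainable} (in the sense of the conjecture) from the ADE graphs and the rational triple point graphs. As Laufer already observed, these are precisely the graphs in parts I, II, III of Table \ref{tableA}: trees with at most one vertex of valency $\geq 3$, that vertex having valency exactly $3$, with arms whose lengths and $(-2)$-patterns match one of the explicit shapes III.1--III.9 (or no branch vertex at all, cases I, II). If the resolution graph $\Gamma$ of the given rational singularity $X_0$ is not obtainable, then $\Gamma$ falls into (at least) one of the following combinatorial situations:
\textbf{(a)} $\Gamma$ has a vertex of valency $\geq 4$;
\textbf{(b)} $\Gamma$ has two or more vertices of valency $3$;
\textbf{(c)} $\Gamma$ has a unique vertex of valency $3$, but its three arms violate the pattern of $(-2)$'s mandated by Laufer's shapes III.1--III.9 (so that the graph cannot be obtained from a rational triple point by weight decreases).

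The second part is to produce, in each case, an Artin-component adjacency to a singularity whose exceptional divisor has the pictured 4-star form, a central smooth rational curve meeting four others transversally; such a configuration has a genuine cross-ratio on the central $\mathbb{P}^1$ and therefore a modulus, so in particular is not simple. The tool is Theorem \ref{thmdefo}: it suffices in each case to exhibit an integrally minimal collection $D_1,\dots,D_5$ of positive roots on $\wt X_0$ whose sum $D=\sum D_i$ is again a positive root and whose dual graph, after blow-down, is the 4-star.

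In case (a), let $v_0$ be a valency-$\geq 4$ vertex and pick four neighbours $v_1,\dots,v_4$; the collection $D_i=E_{v_i}$ (and $D_5=E_{v_0}$) is trivially integrally minimal, and the associated reduced adjacency in Laufer's sense is exactly restriction to the connected 5-vertex subgraph, which is the 4-star. In case (b), let $v_1,v_2$ be two valency-$3$ vertices connected by a (possibly empty) chain $C$ of other vertices inside $\Gamma$; I would take for $D_5$ the positive root supported on $\{v_1\}\cup C\cup\{v_2\}$ (its multiplicities read off a computation sequence, which exists by Lemma 3.1) and for $D_1,\dots,D_4$ the four reduced exceptional curves sitting at the other ends of the four non-chain edges at $v_1$ and $v_2$; smoothing the chain $C$ to a single curve then realises the 4-star. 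Case (c) is handled similarly: the excess $(-2)$-vertex which prevents $\Gamma$ from matching any of III.1--III.9 can, by a short case-by-case inspection of the list, be absorbed into a positive root that, after deformation, merges two edges at the branch vertex and raises its valency to~$4$.

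The main obstacle, and the part requiring the most care, is the combinatorial verification that every rational non-obtainable graph indeed falls into (a), (b) or (c) \emph{and} admits the required integrally minimal collection: one has to check that the candidate $D_5$ is almost reduced at every non-$(-2)$ vertex it meets, that no smaller collection of positive roots generates it, and that $D=\sum D_i$ is itself a positive root (equivalently, admits a computation sequence). This is a finite case analysis, driven by Laufer's shape restrictions and by the rationality constraint that bounds arm lengths; once it is carried out, Theorem \ref{thmdefo} immediately yields a one-parameter Artin-component deformation of $X_0$ to a rational singularity with 4-star exceptional divisor, which carries the cross-ratio modulus and so prevents $X_0$ from being simple.
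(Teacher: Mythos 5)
Your case division into (a) valency $\geq 4$, (b) two valency-$3$ vertices, and (c) a single valency-$3$ vertex is exactly the paper's, and your treatment of (a) and (b) (subgraph, respectively smoothing the double points along the chain joining the two branch vertices) coincides with the paper's reduced adjacencies. The problem is case (c), which is the essential case, and there your sketch has a genuine gap. The paper's route is: first show that every rational graph with a single valency-$3$ vertex that is not obtainable from a double or triple point contains a rational subgraph of type $\wt E_6$, $\wt E_7$ or $\wt E_8$ (central $(-2)$, arms of lengths $(3,3,3)$, $(2,4,4)$ or $(2,3,6)$ with prescribed $(-2)$'s); then, for each of these three confining graphs, write down an explicit integrally minimal collection $D_0,\dots,D_4$ of positive roots whose dual configuration is the star. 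These collections are not of the kind your sketch suggests: for $\wt E_6$ the central vertex of the star is $D_0=E_0+E_{1,1}+E_{2,1}+E_{3,1}$ and one of the satellites is $E_0$ again (so $E_0$ has coefficient $2$ in $D$), and for $\wt E_8$ one needs $D_3=E_{1,1}+E_{2,1}+2E_0+2E_{3,1}+E_{3,2}$, a genuinely non-reduced root. In particular the $4$-valent vertex of the limit configuration is a new cycle spanning the branch point and pieces of all three arms; it is not the original branch vertex with its valency ``raised to $4$'' by merging two edges, and no excess $(-2)$-vertex is being ``absorbed'' locally. Your description of the mechanism therefore does not produce a valid collection, and the finite case analysis you defer to is precisely where the content of the proof lies.

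A secondary point: you assert that the obtainable graphs are ``precisely'' those of Laufer's list I--III, but for the argument one only needs (and the paper only proves) the one direction that a non-obtainable graph with a unique valency-$3$ vertex must have a $(-2)$ central curve (otherwise it is of type III.1) and then, by inspecting arm lengths, must contain one of the three $\wt E_k$-type subgraphs. I would also flag that in case (b) you should check that the reduced cycle supported on the chain together with the four end curves forms an integrally minimal collection with $D$ a positive root; this is true (it is one of Laufer's reduced adjacencies) but is worth saying, since Theorem \ref{thmdefo} is the only tool that turns the combinatorics into an actual deformation.
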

\begin{proof}
For the purpose of this proof we call a  graph as
in the statement a \textit{star}.
If the graph of a rational singularity has a vertex of valency at
least four, then it has a star as subgraph. If there are two
vertices $E_a$ and $E_b$ of valency three, then we can smooth all double points
of the exceptional divisor on the chain between $E_a$ and $E_b$.
In terms of the graph: we combine all vertices on the chain,
including $E_a$ and $E_b$ , into one new vertex. The new graph
has a star as subgraph. 

We are left with graphs with exactly one
vertex of valency three.
We claim that every such graph, which is not obtained from a double or 
triple point graph, has as subgraph a rational graph 
of form given in Table  \ref{tableB}. The meaning of the symbols is the 
same as in Table  \ref{tableA}. 
If all vertex weights are $(-2)$, then the graph is 
$\wt E_6$,  $\wt E_7$ or $\wt E_8$ (also known as the Kodaira graphs 
$\text{IV}^*$, $\text{III}^*$ or $\text{II}^*$). One has to make 
appropriate vertex weights more negative to obtain rational graphs.
We refer to the resulting graphs as graphs of type $\wt E_k$.
To prove the claim
one has only to carefully inspect Table  \ref{tableA}, and note which 
graphs are not there. First of all, the central curve has to be a $(-2)$-curve. 
If the three arms all have length at least three (counting from the central 
vertex), there is a subgraph of type $\wt E_6$. Otherwise, if there is one arm
of length two, and the other two have length at least four, then
the two vertices on the long arms next to the central vertex
have to be $(-2)$-vertices, and there is a subgraph of type $\wt E_7$.  
If not, the second arm has to have length 3, and the third
arm at least 6, and there are at least 6 $(-2)$-curves, so there 
is a subgraph of type $\wt E_8$. The Proposition follows from the following Lemma.
\end{proof}

\begin{table}\caption{Confining  non-simple  singularities}\label{tableB}
\begin{list}{}{}
\item[$\wt E_6$] 
\[
\bp(4,2)(0,-2.3)
\put(0,0){\cir}  \put(0,0){\lijn}
\put(1,0){\cir} \put(1,0){\lijn}
\put(2,0){\ci  {-2}}  \put(2,0){\lijn}
\put(3,0){\cir}  \put(3,0){\lijn}
\put(4,0){\cir}  
\put(2,0){\line(0,-1){1}} \put(2,-1){\cir}
\put(2,-1){\line(0,-1){1}} \put(2,-2){\cir}
\ep
\]
\item[$\wt E_7$] 
\[
\bp(6,1.2)(-1,-1.3)
\put(-1,0){\cir}  \put(-1,0){\lijn}
\put(0,0){\cir}  \put(0,0){\lijn}
\put(1,0){\ci {-2}} \put(1,0){\lijn}
\put(2,0){\ci  {-2}}  \put(2,0){\lijn}
\put(3,0){\ci {-2}}  \put(3,0){\lijn}
\put(4,0){\cir}  \put(4,0){\lijn}
\put(2,0){\line(0,-1){1}} \put(2,-1){\cir}
\put(5,0){\cir}  
\ep
\]
\item[$\wt E_8$] 
\[
\bp(7,1.2)(0,-1.3)
\put(0,0){\cir}  \put(0,0){\lijn}
\put(1,0){\ci {-2}} \put(1,0){\lijn}
\put(2,0){\ci  {-2}}  \put(2,0){\lijn}
\put(3,0){\ci {-2}}  \put(3,0){\lijn}
\put(4,0){\ci {-2}}  \put(4,0){\lijn}
\put(2,0){\line(0,-1){1}} \put(2,-1){\cirr  {-2}}
\put(5,0){\ci  {-2}}  \put(5,0){\lijn}
\put(6,0){\cir}\put(6,0){\lijn}
\put(7,0){\cir}
\ep
\]
\end{list}
\end{table}

\begin{lemma}\label{lemetilde}
Rational singularities with a graph  of type $\wt E_k$
deform into a star. 
\end{lemma}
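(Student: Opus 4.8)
The plan is to use Theorem \ref{thmdefo} to produce the deformation explicitly, by exhibiting an integrally minimal collection of positive roots on the $\wt E_k$ graph whose sum is itself a positive root and which, on the nearby fibre, splits the exceptional set into the five curves of a star. Concretely, for each of the three cases $\wt E_6$, $\wt E_7$, $\wt E_8$ I would identify a cycle $D$ supported on the $(-2)$-part of the graph that is a positive root (equivalently, by Lemma 3.1 quoted above, admits a computation sequence), and whose specialisation creates a new valency-four vertex. The key observation is that the reduced adjacency moves described just before Proposition \ref{notsimple}—smoothing a node of the exceptional divisor where $E_a$ meets $E_b$, replacing them by a single curve of self-intersection $-(a+b-2)$—are exactly what is needed: by smoothing the nodes along one arm we merge several $(-2)$-curves meeting the central $(-2)$-vertex into one curve, raising the valency of the central vertex to four and thereby producing a star.

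The main computational content is checking, for each $\wt E_k$, that the relevant merging can be realised as an \emph{honest} adjacency coming from an integrally minimal collection, not merely a formal graph manipulation. First I would write down the fundamental cycle $Z$ of the $\wt E_k$ graph and verify it is a positive root ($p_a(Z)=0$, which holds precisely because the singularity is rational). The natural choice is to take the collection whose members are the individual curves one wants to keep separate, together with the cycle $D_{\mathrm{arm}}$ supporting the arm to be collapsed; one checks $p_a(D_{\mathrm{arm}})=0$ by the computation-sequence criterion, and that $\sum D_i$ is almost reduced and a positive root. The condition to watch is almost-reducedness: since after making weights more negative the central vertex is a $(-2)$ but the tip vertices are not, the cycle must be reduced exactly at the non-$(-2)$ vertices, so I would choose the collapsing cycle to have multiplicity one at each tip.

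I expect the genuine obstacle to be \textbf{integral minimality}: one must verify that no other collection of $m$ positive roots generates the chosen $D_i$ with non-negative integer coefficients. This is where the three cases $\wt E_6$, $\wt E_7$, $\wt E_8$ genuinely differ, because the lattice of positive roots is the $\wt E_k$ (affine $E$) root lattice, and the affine case introduces the imaginary root $\delta$ (the fundamental cycle of the all-$(-2)$ graph), which can spoil minimality unless the weights have been made negative in the right places. I would handle this by treating the minimal resolution graph as obtained from the affine Dynkin diagram by decreasing certain weights, and argue that decreasing a weight strictly enlarges the self-intersection constraint enough to rule out the unwanted decompositions; the decorations in Table \ref{tableB} (which vertices are forced to be exactly $-2$) are presumably calibrated precisely so that this minimality holds.

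Finally, having produced the deformation, I would read off the graph of the nearby singularity: the surviving irreducible components are the four arm-tips plus the merged central curve, meeting in one point, which is the star of Proposition \ref{notsimple}. One should also confirm that the nearby singularity is still rational (so that $p_g$ is constant and the deformation of the resolution blows down to a deformation of the singularity), which follows from all the $E_{t,i}$ being positive roots with the star intersection configuration being negative definite. The cleanest route is to check all three cases by the same mechanism and note that the degeneration from $\wt E_k$ to the star is, in each case, one of Laufer's reduced adjacencies composed with passage to a subgraph.
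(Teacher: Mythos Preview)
Your overall strategy---invoke Theorem~\ref{thmdefo} on an explicit integrally minimal collection of positive roots---is exactly the paper's, but the collection you sketch will not work, and your closing claim that the deformation is ``one of Laufer's reduced adjacencies composed with passage to a subgraph'' is false. A reduced adjacency either removes vertices or merges two adjacent curves $E_a,E_b$ into one of valency $v_a+v_b-2$; starting from a graph with a single vertex of valency~3 and all others of valency $\leq 2$, no composition of such moves ever produces a vertex of valency~4. So smoothing nodes along one arm, as you propose, keeps the central vertex trivalent. The required adjacency is genuinely non-reduced: in each case the total cycle $D=\sum D_i$ has multiplicity $\geq 2$ at the central $(-2)$-curve $E_0$ (and, for $\wt E_7$ and $\wt E_8$, at several other $(-2)$-curves as well).

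The missing idea is how the fourth arm arises. For $\wt E_6$ the paper takes the new central curve to be $D_0=E_0+E_{1,1}+E_{2,1}+E_{3,1}$, the three arm-tips $D_i=E_{i,2}$ for $i=1,2,3$, and---this is the point---$D_4=E_0$ itself as the fourth arm. One computes $D_0\cdot D_4=E_0^2+3=1$ and $D_i\cdot D_4=0$ for $i=1,2,3$, so the collection has the intersection pattern of a star; almost-reducedness holds because only the $(-2)$-vertex $E_0$ carries multiplicity $>1$. For $\wt E_7$ and $\wt E_8$ the mechanism is the same but the cycles are longer and more overlapping (for $\wt E_8$ one of the $D_j$ even has multiplicity~2 at $E_0$ and $E_{3,1}$). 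Your concern about integral minimality is secondary: once one has these explicit collections the verification is routine, whereas the substantive step you have not yet taken is writing down a collection whose intersection graph is a star at all.
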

\begin{proof}
We need some notation. We denote the central vertex
by  $E_0$. There are three arms, of length $p$, $q$ and $r$ (counted
from the central vertex). Here $(p,q,r)=(3,3,3)$, $(2,4,4)$ or $(2,3,6)$.
Referring to Table \ref{tableB} this means that $p$ is the length of the
arm pointing downwards, and $r$ the length of the right arm.
The  arms are $E_{1,1}+\dots+E_{1,p-1}$,  $E_{2,1}+\dots+E_{2,q-1}$ and
$E_{3,1}+\dots+E_{3,r-1}$, with  $E_{i,j}^2=-b_{i,j}$, and
$E_{i,1}$ intersecting  $E_0$. 

In each of the three cases we define a collection $\{D_0,\dots,D_4\}$
 of  integrally minimal cycles, such that the graph of the collection is a star
with $D_0$ as central vertex, that is,  $D_0\cdot D_i=1$ and $D_i\cdot D_j=0$ 
for $1\leq i<j\leq 4$.                                                 
Theorem \ref{thmdefo} then gives the existence of a deformation into a star 
with the given graph.
For graphs of type $\wt E_6$ we take
\begin{align*}
D_0&=E_0+E_{1,1}+E_{2,1}+E_{3,1}, & D_0^2&=-(b_{1,1}+b_{2,1}+b_{3,1}-4), \\
D_i&=E_{i,2},\quad1\leq i\leq 3, & D_i^2&=-b_{i,2},\\
D_4&=E_0, &D_4^2&=-2,
\end{align*}
for  $\wt E_7$ 
\begin{align*}
D_0&=E_{1,1}+E_{2,2}+E_{2,1}+E_0+E_{3,1}+E_{3,2}, & D_0^2&=-(b_{1,1}+b_{2,2}+b_{3,2}-4), \\
D_1&=E_0+E_{2,1},   & D_1^2&=-2,\\
D_2&= E_0+E_{3,1},  & D_2^2&=-2,\\
D_3&=E_{2,3},  & D_3^2&=-b_{2,3},\\
D_4&=E_{3,3}, &D_4^2&=-b_{3,3}
\end{align*}
and for $\wt E_8$ 
\begin{align*}
D_0&=E_{2,2}+E_{2,1}+E_0+E_{3,1}+E_{3,2}+E_{3,3}+E_{3,4}, & D_0^2&=-(b_{2,2}+b_{3,4}-2), \\
D_1&=E_{1,1}+E_0+E_{2,1},   & D_1^2&=-2,\\
D_2&= E_{1,1}+E_0+E_{3,1}+E_{3,2}+E_{3,3},  & D_2^2&=-2,\\
D_3&= E_{1,1}+E_{2,1}+2E_0+2E_{3,1}+E_{3,2},  & D_3^2&=-2,\\
D_4&=E_{3,5}&D_4^2&=-b_{3,5}.
\end{align*}

\end{proof}

\begin{remark}
In the special cases that $-b_{1,p-1}=-b_{2,q-1}=-b_{3,r-1}=-2$ for $\wt E_6$,
$-b_{2,q-1}=-b_{3,r-1}=-2$ for $\wt E_7$ and $-b_{3,r-1}=-2$ for $\wt E_8$,
the deformation has a quotient construction.
Then $-2K$ is an integral cycle, whose coefficients
are just the familiar multiplicities  of the $\wt E_k$-diagram.
The canonical cover  is a minimally elliptic singularity and
the  quotient of a
deformation to a simple elliptic singularity gives a deformation to 
a star. 
For $\wt E_8$ the multiplicities  are 
\[
\begin{matrix}2 &4&6&5&4&3&2&1\\
           &  & 3\end{matrix}
\]
and the double cover has a minimally elliptic graph  of type $\wt E_6$ with
$E_{1,2}^2=E_{2,2}^2=-b_{2,2}$ and $E_{3,2}^2=-(2b_{3,4}-2)$.
\end{remark}

\begin{cor}
Any non-simple rational singularity deforms into a star.
\end{cor}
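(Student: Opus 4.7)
The plan is to derive the corollary as an immediate consequence of Proposition \ref{notsimple}, by observing that the proposition already handles every rational singularity that is not obtainable from a rational double or triple point. So the only work is to argue that a non-simple rational singularity must in fact lie in this ``not obtainable'' regime.

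Concretely, I would start with a non-simple rational singularity $X$ and split into two cases. If $X$ is not obtainable from a rational double or triple point, then Proposition \ref{notsimple} is directly applicable and produces a one-parameter deformation of $X$ into a star (in fact, into a singularity with a modulus carried by the star-shaped exceptional set). This case requires nothing new. If $X$ is obtainable, then its resolution graph lies in Laufer's Table~\ref{tableA} of quasi-homogeneous taut graphs, and one must argue that this contradicts the non-simplicity of $X$. For the classes treated later in the paper (quotient singularities, rational quadruple points, sandwiched singularities), this contradiction is provided by the ``simpleness'' half of the main theorem (\textit{Proposition} in the introduction), so the second case is vacuous in those classes.

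The hard part is precisely the second case: it is exactly the ``obtainable $\Rightarrow$ simple'' direction of the conjecture, which the paper establishes only in the three named classes. So the genuine content of the corollary is that, once one restricts to any class of rational singularities where the full conjecture is known, non-simplicity yields a deformation to a star. Operationally this is a one-line argument from Proposition \ref{notsimple} and the converse direction; the real mathematical work lies entirely in Proposition \ref{notsimple} (whose proof via Lemma~\ref{lemetilde} already exhibited the required deformations into stars by integrally minimal collections of cycles on $\wt E_6$, $\wt E_7$, $\wt E_8$ subgraphs) and in the class-by-class verification of ``obtainable $\Rightarrow$ simple'' carried out in the remaining sections.
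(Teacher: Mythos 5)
There is a genuine gap in your second case, and it leads you to a wrong conclusion about the scope of the corollary. When $X$ is obtainable from a rational double or triple point, you do not need the (unproven in general) implication ``obtainable $\Rightarrow$ simple''. The paper's argument is different and much cheaper: singularities obtainable from double or triple points are taut, hence determined by their graphs, and a given singularity admits only finitely many graphs of adjacent singularities; so if $X$ deformed \emph{only} into obtainable singularities it would meet only finitely many isomorphism classes and would be simple. Since $X$ is assumed non-simple, it must therefore deform into some rational singularity $Y$ that is \emph{not} obtainable from a double or triple point. Proposition \ref{notsimple} applies to $Y$ and yields a deformation of $Y$ into a star, and by transitivity of adjacency (openness of versality) $X$ itself deforms into a star.

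Consequently your closing assessment is incorrect: the corollary is not conditional on the ``obtainable $\Rightarrow$ simple'' half of the conjecture and is not restricted to quotient singularities, rational quadruple points, or sandwiched singularities; it holds for every non-simple rational singularity. Your first case is fine, but as written your argument only establishes the corollary in classes where the full conjecture is already known, which is strictly weaker than the statement being proved.
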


\begin{proof}
If a singularity obtainable from a rational double or triple point only deforms 
into such singularities (necessarily only finitely many), then it
is simple. So if it is not simple, then it deforms to a rational singularity, not
of this type, and therefore also into a star.
\end{proof}

Next we study the adjacencies on the Artin component
of quasi-homogeneous  taut singularities. 
If the singularity is obtainable from a double point and the multiplicity is at least
at least four,  then it can also be  obtained from a triple point. Note that 
it can deform into double points.
We start by determining the positive roots. 

\begin{lemma}\label{rootslemma}
Let $X'$ be obtained from the rational triple point $X$, with the
irreducible components $E'_i$ of the exceptional divisor of $X'$
corresponding to the components $E_i$ of $X$.
A cycle $D'=\sum d_iE'_i$ is a positive root of $X'$ if and only if
$D=\sum d_iE_i$ is a positive root of $X$ with $d_i=1$ for all $i$
with ${(E'_i)}^2<E_i^2$.
\end {lemma}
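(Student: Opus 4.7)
The plan is a direct computation comparing $p_a(D)$ and $p_a(D')$, exploiting that $X'$ and $X$ share the same dual graph except for the diagonal self-intersections.

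Write $a_i = E_i^2 - (E'_i)^2 \ge 0$, with $a_i > 0$ precisely at the vertices whose weight has been made more negative. Since the dual graphs agree, $E_i \cdot E_j = E'_i \cdot E'_j$ for $i \ne j$, so
\[
(D')^2 - D^2 \;=\; \sum_i d_i^{\,2}\bigl((E'_i)^2 - E_i^2\bigr) \;=\; -\sum_i d_i^{\,2}\, a_i .
\]
Because all components are smooth rational curves on both minimal resolutions, adjunction gives $K\cdot E_i = -2 - E_i^2$ and $K'\cdot E'_i = -2 - (E'_i)^2$, hence $K'\cdot E'_i - K\cdot E_i = a_i$. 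Summing with weights $d_i$ yields $D'\cdot K' - D\cdot K = \sum_i d_i a_i$. Adding the two contributions,
\[
2p_a(D') - 2p_a(D) \;=\; -\sum_i d_i(d_i - 1)\, a_i .
\]

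Each summand on the right is non-negative, so $p_a(D') \le p_a(D)$. Now suppose $D'$ is a positive root of $X'$, i.e.\ $p_a(D') = 0$. Since $X$ is rational (a rational triple point), every positive cycle $C$ on its exceptional set satisfies $p_a(C) \le 0$ by Artin's criterion; in particular $p_a(D) \le 0$. Combined with $p_a(D) \ge p_a(D') = 0$, this forces $p_a(D) = 0$, so $D$ is a positive root of $X$. The displayed identity then forces $\sum_i d_i(d_i-1) a_i = 0$, which means $d_i \in \{0,1\}$ whenever $a_i > 0$, i.e.\ $d_i = 1$ for those $i$ with $(E'_i)^2 < E_i^2$ at which $D$ actually has support (and $d_i=0$ at the others). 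Conversely, if $D$ is a positive root of $X$ and $d_i \le 1$ at every $i$ with $a_i > 0$, then every term $d_i(d_i-1) a_i$ vanishes, so $p_a(D') = p_a(D) = 0$; since $D > 0$ has the same coefficients as $D'$, also $D' > 0$, and $D'$ is a positive root of $X'$.

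I do not foresee a real obstacle: the argument is essentially one adjunction computation together with Artin's rationality inequality $p_a(C)\le 0$. The only point to be careful about is keeping track of the canonical class on the two different resolutions, which is what introduces the linear term $\sum d_i a_i$ that partially cancels the quadratic term $\sum d_i^{\,2} a_i$ and produces the clean expression $\sum d_i(d_i-1)a_i$.
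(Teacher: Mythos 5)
Your proof is correct. The sufficiency direction (the condition on the $d_i$ forces $p_a(D')=p_a(D)$) is the same adjunction computation as in the paper, just packaged globally as the identity $2p_a(D')-2p_a(D)=-\sum_i d_i(d_i-1)a_i$ rather than term by term. Where you genuinely diverge is in the necessity of the reducedness condition at the modified vertices: the paper gets $d_i\le 1$ there structurally, by observing that such an $E_i'$ has coefficient $1$ in the fundamental cycle (computed on the non-minimal resolution of $X$, where rationality forces each $(-1)$-curve to meet the fundamental cycle with multiplicity $1$) and invoking that positive roots are dominated by the fundamental cycle via the computation-sequence lemma. You instead extract it from the sign of the genus defect: since every summand $d_i(d_i-1)a_i$ is non-negative, $p_a(D')\le p_a(D)$, and Artin's inequality $p_a(D)\le 0$ for the rational singularity $X$ squeezes both genera to zero and kills the sum, giving $d_i\in\{0,1\}$ wherever $a_i>0$. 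Your route is more self-contained (it needs only adjunction and Artin's criterion, not the fundamental-cycle analysis of the intermediate resolution), while the paper's route exposes the geometric reason for the constraint and reuses machinery already established for Theorem \ref{thmdefo}. Both correctly read the condition ``$d_i=1$'' as ``$d_i\le 1$'' at the modified vertices, i.e.\ $D$ reduced there; your explicit remark that $d_i=0$ is allowed off the support is a welcome clarification.
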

\begin{proof}
We first remark that an $E_i'$ with ${(E'_i)}^2=E_i^2-\beta_i<E_i^2$ has
coefficient 1 in the fundamental cycle.
Indeed, by construction the exceptional divisor $E'$
is a subset of the exceptional set of a non-minimal resolution
of $X$; the fundamental cycle on this resolution can be computed by first 
computing the fundamental cycle on $E'$,
and by rationality each $(-1)$-curve intersects this cyle with
multiplicity 1. Therefore the condition on $d_i$ is necessary.

Let $D'$ be a positive root, so $p_a(D')=0$. We compute
$p_a(D)=1+\frac12D\cdot(D+K)$. If $d_i>1$, then
$E_i\cdot D= d_iE_i^2+\sum_{E_i\cdot E_j>0}d_j=
d_i{(E'_i)}^2+\sum_{E'_i\cdot E'_j>0}d_j=
E'_i\cdot D'$ and $E_i\cdot K = E'_i\cdot K'$.
If $d_i=1$ and ${(E'_i)}^2=E_i^2-\beta_i$, then
$E_i\cdot(D+K)=E'_i\cdot D'+\beta_i+E'_i\cdot K'-\beta_i=
E'_i\cdot (D'+K')$. So also $p_a(D)=0$.
The same computation shows that $p_a(D)$ implies $p_a(D')=0$, if
$d_i=1$ whenever $\beta_i>0$.
\end {proof}

\begin{prop}\label{propartin}
A singularity, obtainable from a rational triple point,
deforms on the Artin component only into singularities,
obtainable from triple and double points.
\end {prop}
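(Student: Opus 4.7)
The strategy is to take an adjacency of $X'$ on the Artin component, construct a parallel adjacency of the underlying rational triple point $X$, and show that the deformed singularity $Y'$ is obtained from the deformed singularity $Y$ by making certain vertex weights more negative. Since the multiplicity of a rational surface singularity cannot increase along the Artin component, $Y$ will necessarily be smooth, a rational double point, or a rational triple point, and $Y'$ will be obtainable from it.

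First, I take a one-parameter deformation of $X'$ on the Artin component with simultaneous resolution $\tilde X'_T \to T$; the generic fibre is the minimal resolution of a rational singularity $Y'$, whose exceptional components $E'_{t,\alpha}$ specialise to positive roots $D'_\alpha$ of $X'$. The fundamental cycle of $X'$ is almost reduced: by the opening remark in the proof of Lemma~\ref{rootslemma} its coefficient is one at each vertex with $\beta_k>0$, while at the remaining vertices it is bounded by the corresponding coefficient in the fundamental cycle of the rational triple point $X$, which is itself almost reduced. Hence the converse of Theorem~\ref{thmdefo} applies, and $\{D'_\alpha\}$ is an integrally minimal collection with sum $D'=\sum D'_\alpha$ an almost reduced positive root.

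Lemma~\ref{rootslemma} then produces corresponding cycles $D_\alpha$ on $\tilde X_0$, each a positive root of $X$, with sum $D=\sum D_\alpha$ also a positive root. Because $D'$ is almost reduced, the total coefficient $\sum_\alpha d_{\alpha,k}$ at every vertex $k$ with $\beta_k>0$ is at most one, so the supports of distinct $D_\alpha$ are disjoint at such vertices. I then verify that $\{D_\alpha\}$ is integrally minimal for $X$: if a different collection $\{C_j\}_{j=1}^m$ of positive roots of $X$ generates $\{D_\alpha\}$ with non-negative integer coefficients $a_{\alpha,j}$, then summing over $\alpha$ at any vertex with $\beta_k>0$ yields $\sum_j\bigl(\sum_\alpha a_{\alpha,j}\bigr)c_{j,k}\leq 1$, forcing every used $C_j$ to have coefficient at most one at every such vertex in its support. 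By Lemma~\ref{rootslemma} each used $C_j$ lifts to a positive root $C'_j$ of $X'$, and the resulting collection generates $\{D'_\alpha\}$, contradicting the integral minimality of $\{D'_\alpha\}$.

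With integral minimality established, Theorem~\ref{thmdefo} produces a deformation of $X$ to a rational singularity $Y$ whose exceptional divisor splits into $m$ components homologous to the $D_\alpha$; since $X$ has multiplicity three, $Y$ is smooth, a rational double point, or a rational triple point. To finish, I compare graphs: the support disjointness gives $D'_\alpha\cdot D'_\beta=D_\alpha\cdot D_\beta$ for $\alpha\neq\beta$, while $(D'_\alpha)^2=D_\alpha^2-\sum_{k\in\text{supp}(D_\alpha)}\beta_k\leq D_\alpha^2$; hence the graph of $Y'$ differs from that of $Y$ only by having certain vertex weights made more negative, so $Y'$ is obtainable from $Y$. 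The main obstacle is the integral minimality step: the lifting must be made precise for the used $C_j$'s, and one must verify that the completed lifted collection is genuinely different from $\{D'_\alpha\}$ as a multi-set so that the contradiction really bites.
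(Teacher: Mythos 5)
Your proof follows the paper's argument essentially verbatim: reduce the adjacency on the Artin component to an integrally minimal collection of positive roots via Theorem~\ref{thmdefo}, transfer it to the triple point via Lemma~\ref{rootslemma}, invoke that triple points deform only into double and triple points, and compare self-intersections using that the modified curves have coefficient one in $D'$. The only difference is that you spell out the verification that integral minimality is preserved under the correspondence of Lemma~\ref{rootslemma}, which the paper simply asserts; your verification (and the graph comparison at the end) is sound and matches the paper's intent.
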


\begin{proof}
Let $X'$ deform into a surface with several singularities. 
By openess of versality we can
smooth all but one of them, so we may as well assume that 
there is only one singularity.
As $X'$ has almost reduced
fundamental cycle, the deformation $X'_T$ can be described by an
integrally minimal collection $D_1'$, \dots, $D_m'$
of positive roots. 
Lemma \ref{rootslemma} gives an 
integrally minimal collection $D_1$, \dots, $D_m$, determining a deformation 
$X_T$ of the triple point $X$. As triple points deform only into triple or 
double points,  the graph of $D$ is a double or triple point graph. An $E_i'$ 
with ${(E'_i)}^2=E_i^2-\beta_i<E_i^2$
can occur in at most one $D_j'$, as its coefficient in $D'$ is one.
Therefore ${(D'_j)}^2=D_j^2-\sum \beta_i(j)$, where the sum runs over all 
$i$ such that $E_i'$ is contained in the support of $D_j$. So the graph of
$D'$ is obtainable from that of $D$.
 \end {proof}

\section{Rational quadruple points}
The deformation theory of rational quadruple points was studied by De Jong and 
Van Straten  \cite{tdq}. The base space of the versal deformation is up to a 
smooth factor isomorphic to a explicitly described space $B(n)$ with
$n+1$ irreducible components. The integer $n$ can 
be found from the resolution graph of the rational quadruple points, as it is the 
number of virtual quadruple points, or in other words the number of quadruple 
points in the resolution process.

If the base space has only two components, then a deformation to any other 
quadruple point has to occur over the intersection of the two components. So 
in particular it is a deformation on the Artin component.

\begin{prop}\label{proprqp}
A rational quadruple point, obtained from a triple point, is simple.
\end{prop}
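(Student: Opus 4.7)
The plan is to reduce the problem to Proposition \ref{propartin}, by showing that for a rational quadruple point $X'$ obtained from a rational triple point the base space of the versal deformation has just two irreducible components; in the notation of \cite{tdq} this means establishing $n=1$, where $n$ is the number of virtual quadruple points in the resolution of $X'$.

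The central step is the claim $n=1$. Since $X'$ is obtained from a triple point $X$ by making some vertex weights more negative, the dual graphs of $X'$ and $X$ coincide, and the coefficients of the fundamental cycle at non-$(-2)$-vertices remain equal to $1$, so $X'$ has almost reduced fundamental cycle. I would then analyse the resolution of $X'$ step by step and compare it with that of $X$: the multiplicity at each successive singular point is determined by the intersection behaviour of the proper transform of the maximal ideal with the exceptional set, and making vertex weights more negative does not raise this multiplicity beyond what occurs in the resolution of $X$. Because $X$ is a triple point, all its subsequent resolution multiplicities are at most $3$; the same bound transfers to $X'$ after the first blowup, so the only quadruple point encountered in resolving $X'$ is $X'$ itself.

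With $n=1$, the observation preceding the proposition applies: every adjacency of $X'$ to another rational quadruple point lies over the intersection of the two components of $B(1)$, hence on the Artin component. By Proposition \ref{propartin} each such adjacent singularity is itself obtainable from a rational triple or double point, and by Theorem \ref{thmdefo} the adjacency is represented by an integrally minimal collection of positive roots on the minimal resolution of $X'$. Only finitely many such collections exist, so only finitely many resolution graphs — and hence, by tautness, only finitely many analytic types — can arise. For adjacencies to singularities of smaller multiplicity the same Artin-component argument handles those reached on the Artin component, while those reached on the other component of $B(1)$ have multiplicity at most $3$ (no quadruple point appears there) and come in finitely many isomorphism classes by the explicit description of \cite{tdq}. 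Assembling the cases gives the simpleness of $X'$.

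The main obstacle is the verification of $n=1$; it rests on a combinatorial comparison of the resolution multiplicity sequences of $X$ and $X'$, which is plausible from the general behaviour of rational surface singularities under weight changes but requires precise bookkeeping along the entire resolution tree. A clean alternative might be to read off $n=1$ directly from Artin's list of rational triple point graphs, case by case, together with the description of the effect of making weights more negative on the fundamental cycle and on subsequent blowups.
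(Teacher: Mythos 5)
You take the same overall route as the paper: reduce to Proposition \ref{propartin} by showing that the number $n$ of virtual quadruple points equals $1$, so that the base space is $B(1)$ and every adjacency to another quadruple point takes place on the Artin component. That framing, and the treatment of the finitely many adjacencies once $n=1$ is known, are fine. The gap is in the central step $n=1$, and it is not merely a matter of ``precise bookkeeping'': the principle you invoke --- that making vertex weights more negative does not raise the multiplicities occurring in the resolution --- is false as stated. By rationality $-Z^2=2+\sum d_i(b_i-2)$, so decreasing a weight \emph{increases} the multiplicity; that is exactly how the quadruple point $X'$ arises from the triple point $X$ in the first place. The singularities on the first blow-up of $X'$ have as graphs the connected components of $\{E_i' : E_i'\cdot Z'=0\}$ with their (decreased) weights, and such a component could a priori contain both the $(-3)$-vertex $E_0'$ inherited from $X$ and the vertex $E_m'$ whose weight was decreased; that subgraph would again have multiplicity $4$. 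So the bound $\le 3$ coming from the resolution of $X$ does not transfer automatically to $X'$ after the first blow-up, and your argument gives no reason why it should.

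What is actually needed (and what the paper supplies) is a concrete computation with the fundamental cycle: a quadruple point on the first blow-up forces $E_i'\cdot Z'=0$ for every vertex on the chain from $E_0'$ to $E_m'$; since $E_0'$ has coefficient $1$ in $Z'$, each equality $E_i'\cdot Z'=0$ propagates the identity of the coefficients of $Z'$ and $Z$ one step further along the chain, until one reaches $E_m'$ and finds $E_m'\cdot Z'\le E_m\cdot Z-1<0$, a contradiction. Some argument of this kind (or the case-by-case check against the classification that you mention as an alternative) is indispensable; without it your proof of $n=1$, and hence of the proposition, is incomplete.
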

\begin{proof}
By  Proposition \ref{propartin} any other quadruple point on the 
Artin component is obtainable from a triple point.
 Therefore simpleness follows if
the base space is $B(1)$, with exactly two components.

We show that $n=1$, that is, there is no quadruple point on the first blow up.
Let $E_0$ be the $(-3)$-curve of the triple point $X$, and $E_m$ the unique curve
of the quadruple point $X'$ 
with $E_m'\cdot E_m'< E_m\cdot E_m$  (possibly $E_m=E_0$).  
There  is a quadruple point on the first blow up of $X'$ if and only if $E_i'\cdot Z'=0$
for every curve  $E_i'$ on the chain from $E_0'$ to $E_m'$.

The multiplicity of $E_0'$ in the fundamental cycle $Z'$ is 1.
If  $E_0'\cdot Z'=0$ on $\wt X'$, then the neighbour $E_1'$
of $E_0'$ on the chain has the same multiplicity in $Z'$ as $E_1$ in $Z$,
and if $E_1'\cdot Z'=0$, its neighbour has also the same multiplicity, and so 
on.  This process stops with an  $E_i'\cdot Z'<0$, or reaches $E_m'$,
and $E_m'$ has the same multiplicity  in $Z'$ as 
$E_m$ in $Z$. But then $E_m'\cdot Z'<0$. 
\end{proof}

\begin{remark}
We can characterise the
simple rational quadruple points as those with almost reduced fundamental cycle 
and without quadruple points on the first blow up.
The easiest way to see this is to check the classification of quadruple points 
\cite[Prop. 4]{st-p}, as 
it is not always immediately obvious which of the two $(-3)$-curves one has to 
make into a $(-2)$ to get a triple point.
\end{remark}

\section{Sandwiched singularities}
Sandwiched singularities are normal surface singularities, which admit a 
birational map to $(\C^2,0)$. Following De Jong and Van Straten \cite{tds} 
we describe  them and their deformations in terms of \textit{decorated curves}.

\begin{defn}
Let $C=\bigcup_{i\in B} C_i$ be a plane curve singularity. The number
$m(i)$ is the sum of the multiplicities of the branch $C_i$ in the multiplicity 
sequence of the minimal embedded resolution of $C$.
\end {defn}
\begin{defn}
A \textit{decorated curve} is a curve singularity together with a function
$l\colon B\to \N$ on the set of branches, with the property that
$l(i)\geq m(i)$. A decorated curve $(C,l)$ is non-singular  if
$C$ consists of one smooth branch, and $l(1)=0$.
\end{defn}

\begin{defn}
Let $(C,l)$  be a singular decorated curve on a smooth surface $(Z,p)$ and
let $m_i$ be the multiplicity of the $i$-th branch  $C_i$. Consider the blow 
up $\text{Bl}_p Z\to Z$  of  the singular point.  The \textit{strict transform} 
$(\wl C, \bar l)$ of $(C,l)$ is the decorated curve, consisting 
of the strict transform $\wl C$ of $C$ with decoration $\bar l(i)=l(i)-m_i$.
\end{defn}

Note that we do not allow blow-ups in non-singular decorated curves, for then 
$\bar l$ would become a negative function. Therefore the 
\textit{embedded resolution} of a decorated curve is the unique (minimal) 
composition of point blow-ups, such that the strict transform of the 
decorated curve is non-singular. It is obtained from the minimal 
embedded resolution of $C$ by $l(i)-m(i)$ consecutive point blow-ups 
in each branch  $C_i$. 

\begin{defn}
Let $(C,l)\subset (Z,p)$  be a decorated curve with embedded resolution
$(\wl  C,0)\subset \wt Z(C,l)$. The analytic space $X(C,l)$ is obtained from
$\wt Z(C,l)- \wl C$ by blowing down the maximal compact subset, that is, all
exceptional divisors, not intersecting the strict transfrom $\wl C$.  
\end{defn}

The space $X(C,l)$ can be smooth, or it may have several singularities. Each 
singularity is a sandwiched singularity. Given a sandwiched singularity, it is 
always possible to find a decorated curve $(C,l)$ such that the sandwiched 
singularity is the only singularity of the space  $X(C,l)$. Even then the
representation of a sandwiched singularity as $X(C,l)$ is not unique.

\begin{prop}\label{notsandwich}
A singularity is not sandwiched, if its graph contains $D_4$ as subgraph
or has the following subgraph:
\[
\bp(4,1,3)(0,-1)
\put(0,0){\ci {-2}}  \put(0,0){\lijn}
\put(1,0){\ci {-2}} \put(1,0){\lijn}
\put(2,0){\ci  {-2}}  \put(2,0){\lijn}
\put(3,0){\ci {-2}}  \put(3,0){\lijn}
\put(4,0){\ci {-2}}  
\put(2,0){\line(0,-1){1}} \put(2,-1){\vit{-3}}
\ep
\]
\end{prop}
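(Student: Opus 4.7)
The plan is to argue the contrapositive. A sandwiched singularity $X$ has a minimal resolution $\wt X$ that admits a birational morphism to $(\C^2,0)$, which necessarily factors as a composition of blow-ups at points; the resulting exceptional divisor $\mathcal{E}$ of $\wt X\to\C^2$ is thus a tree of smooth rational curves with simple normal crossings, and $\Gamma(X)$ is an induced subgraph of $\mathcal{E}$ with the same self-intersections. Each $E\in\mathcal{E}$ is born with $E^2=-1$ and acquires at most two neighbours at birth (the pre-existing curves through its birth point); each subsequent blow-up at a point of $E$ decreases $E^2$ by one and creates a new neighbour, while simultaneously erasing a birth edge precisely when the blow-up is at an intersection $E\cap F$ with a current neighbour $F$.

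The key bookkeeping is: if $E\in\mathcal{E}$ has $c\in\{0,1,2\}$ pre-existing curves through its birth point, undergoes $s$ later blow-ups at smooth non-intersection points, and $b$ later blow-ups at intersection points on $E$, then $E^2=-1-b-s$ and the valency of $E$ in $\mathcal{E}$ equals $c+s$. In particular, if $E^2=-2$ and the valency of $E$ is at least three, then $s=1$, $b=0$ and $c=2$: the curve $E$ was created by blowing up the intersection of two earlier curves, and exactly one further blow-up at an isolated smooth point of $E$ was performed, pinning the valency to exactly three.

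For the $D_4$ case, the central $(-2)$-vertex $v$ has exactly three neighbours in $\mathcal{E}$, two ``below'' ($w_1,w_2$, existing before $v$ and meeting at the point blown up to create $v$) and one ``above'' ($w_3$). The pair $w_1,w_2$ must have come to meet earlier, so WLOG $w_1$ was born on $w_2$; this is one isolated blow-up on $w_2$, and $v$'s creation at $w_1\cap w_2$ is one intersection blow-up on $w_2$, forcing $w_2^2\leq -3$ and contradicting $w_2^2=-2$. For the six-vertex subgraph, apply the key step at the central $(-2)$-vertex $v_3$ whose three neighbours in $\mathcal{E}$ are $v_2,v_4,v_3'$, and split on which of them is above. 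If above is $v_3'$, the $D_4$ argument applies verbatim and forces $v_2^2\leq -3$ or $v_4^2\leq -3$. If above is $v_2$ (symmetrically, if above is $v_4$), the below pair is $\{v_3',v_4\}$, so these two met earlier; avoiding $v_4^2\leq -3$ requires $v_4$ to have been born on $v_3'$, but then the only subsequent blow-up on $v_4$ is the intersection blow-up creating $v_3$, so the valency of $v_4$ in $\mathcal{E}$ is $c+s=1+0=1$, its unique neighbour being $v_3$, and the chain neighbour $v_5$ (with $v_5\neq v_3$ since they carry different prescribed weights) cannot exist.

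The delicate point is the case analysis for the six-vertex subgraph: one must carefully track how an intersection blow-up consumes a unit of self-intersection while replacing (not adding to) a birth edge, so that the valency count $c+s$ can be strictly less than the total number of blow-ups on $E$. Once this is pinned down, each case either forces a weight $\leq -3$ in a slot the subgraph demands be $-2$, or exhausts the allowed valency before the chain can be extended.
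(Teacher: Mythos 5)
Your blow-up bookkeeping (valency $=c+s$, $E^2=-1-s-b$, with $c\le 2$ and intersection blow-ups trading an old neighbour for a new one) is sound, and it gives a genuinely different route from the paper, which disposes of both graphs with a single numerical criterion: a sandwiched singularity always has an exceptional curve of multiplicity one in the fundamental cycle $Z$ with $E_i\cdot Z<0$, and neither graph admits such a curve. Your $D_4$ argument and Case A are correct. The gap is in Case B, exactly at the spot you yourself flag as delicate: having forced $v_4$ to be born on $v_3'$, you assert $c(v_4)=1$ and conclude that $v_4$ has valency one in $\mathcal{E}$. Nothing rules out $c(v_4)=2$, i.e.\ that the birth point of $v_4$ is a satellite point $v_3'\cap F$ for a second pre-existing curve $F$. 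In that case $v_4$ ends with two neighbours, $F$ and $v_3$, so $F=v_5$ is perfectly consistent and your contradiction evaporates.

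This omission is not cosmetic: as written, your proof never uses that the branch vertex has weight exactly $-3$, so it would equally ``prove'' that the chain of five $(-2)$'s with a $(-4)$ attached to the middle is not sandwiched. That graph \emph{is} sandwiched: blow up a point of $\C^2$ (creating $v_3'$), a free point of $v_3'$ (creating $v_5$), then the satellite points $v_3'\cap v_5$ and $v_3'\cap v_4$ (creating $v_4$ and $v_3$), then free points of $v_3$, $v_2$, $v_1$, and contract everything except the last $(-1)$-curve. To close the gap you must handle $c(v_4)=2$: then $F=v_5$, so $v_3'$ and $v_5$ met before $v_4$ was born and one was born on the other. If $v_3'$ was born on $v_5$, then $v_5$ suffers two blow-ups (the births of $v_3'$ and of $v_4$), forcing $v_5^2\le -3$ and contradicting $v_5^2=-2$; if $v_5$ was born on $v_3'$, then $v_3'$ suffers three blow-ups (the births of $v_5$, $v_4$ and $v_3$), forcing $(v_3')^2\le -4$ and contradicting $(v_3')^2=-3$. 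Only in this last count does the hypothesis $-3$ enter, which is why the sub-case cannot be skipped.
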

\begin{proof}
A sandwiched singularity has at least one exceptional curve $E_i$
with $E_i\cdot Z<0$ and multiplicity one in the fundamental cycle $Z$. Indeed, 
the strict transform of the first blown-up curve in the construction has this 
property: the compact part of the divisor of a general linear function is an 
upper bound for $Z$. This criterion excludes $D_4$ and the graph
shown.
\end{proof}

It follows that the singularities of type III.5 and higher in Table \ref{tableA} 
are not sandwiched. Most of the other ones are not excluded by the above 
criterion, and in fact they are sandwiched.

The numbers $l(i)$ determine a divisor on the normalisation $\wl C$ of
$C$. This interpretation allows a more global point of view, in which a
decorated curve $(C,l)$ is a curve $C$ on a smooth surface $Z$ together with a 
divisor $l$ on the normalisation $\wl C$. We form again the embedded resolution
$\wt Z(C,l)$ and blow down the
maximal compact subset of  $\wt Z(C,l)- \wl C$ to obtain the space $X(C,l)$.

We now consider deformations.
Recall that a deformation of a plane curve singularity admits a 
simultaneous normalisation if and only if it is $\delta$-constant.

\begin{defn}
A 1-\textit{parameter deformation} $(C_S,l_S)$ of a decorated curve $(C,l)$ over a 
germ $S$ of a smooth curve is  a deformation $C_S\to S$ of $C$ with 
simultaneous normalisation $\wl C_S\to S$, together with a deformation 
of the divisor $l$, such that each fibre $(C_s,l_s)$ is a decorated curve. 
\end{defn}

\begin{theorem}[{\cite[Thm. 4.4]{tds}}]
The 1-parameter deformations of a sandwiched singularity $X(C,l)$ are exactly
deformations $X(C_S,l_S)$ for 1-parameter deformations $(C_S,l_S)$ of the 
decorated curve $(C,l)$.
\end{theorem}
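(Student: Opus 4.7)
The plan is to construct mutually inverse processes between one-parameter deformations of $(C,l)$ and of $X(C,l)$, using the birational diagram $X(C,l)\leftarrow \wt Z(C,l)\to (Z,p)$ as a bridge, and to verify they match up while respecting the base $S$. The one direction is essentially a relative version of the construction of $X(C,l)$ itself, while the other requires inverting that construction starting from a deformation of the singularity alone.

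In the forward direction, given a one-parameter deformation $(C_S,l_S)$ over $S$, the $\delta$-constant deformation $C_S$ admits by assumption a simultaneous normalisation $\wl C_S\to S$, so the multiplicity sequence of the minimal embedded resolution is constant in the family. Starting from $C_S\subset Z\times S$ one would build the relative embedded resolution $\wt Z_S(C_S,l_S)\to Z\times S$ inductively: each blow-up centre is a section over $S$ (first the singular locus of the strict transform, and afterwards the extra points prescribed by $l_S-m$), so each step is flat and fibrewise produces the embedded resolution. The exceptional components of $\wt Z_S(C_S,l_S)$ that do not meet $\wl C_S$ form a negative-definite configuration in each fibre and hence admit a simultaneous contraction over $S$, producing $X(C_S,l_S)\to S$ as a one-parameter deformation of $X(C,l)$.

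In the reverse direction, given a one-parameter deformation $X_S\to S$ of $X(C,l)$, one uses that sandwiched singularities are rational: after a finite base change if necessary there is a simultaneous resolution $\wt X_S\to S$ with constant dual graph. Then one would extend this resolution to a deformation of the ambient surface by performing the remaining blow-ups of $\wt Z(C,l)\to \wt X(C,l)$ in family; each of these is centred on a smooth section meeting the strict transform of $\wl C$ at the place prescribed by $l$, and so lifts to a smooth family $\wt Z_S\to S$. Blowing down over $S$ the components whose image lies in the exceptional set of $\wt X\to X$ recovers $Z\times S$; the image of the strict transform of $\wl C$ is a flat deformation $C_S$ of $C$, and the towers of blow-ups above each branch encode $l_S$.

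The main obstacle is the reverse direction, specifically the step from $\wt X_S$ back to the ambient $\wt Z_S$ and then down to $Z\times S$. One has to check that the additional blow-up centres needed beyond $\wt X(C,l)$ trace out honest smooth sections over $S$, and that the resulting sequence of contractions globally yields $Z\times S$ (not just something birational to it fibrewise). A subtlety is that the presentation of a sandwiched singularity as $X(C,l)$ is not unique, so the correspondence is naturally stated relative to a fixed such presentation; once this is fixed, the reconstruction is canonical, and the verification that the resulting $(C_S,l_S)$ is a genuine one-parameter deformation of the decorated curve (simultaneous normalisation plus the decoration $l_S\geq m$ fibrewise) reduces to the constancy of the multiplicity sequence, which in turn follows from the constancy of the resolution graph of $X_S$.
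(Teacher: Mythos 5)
The paper does not actually prove this statement: it quotes it as \cite[Thm.~4.4]{tds} and points to M\"ohring's thesis for a simplified proof, in which the family $X(C_S,l_S)$ is constructed directly by blowing up a family of complete ideals. Measured against those sources, your attempt contains two genuine gaps, one in each direction.

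In the forward direction, the blow-up centres are not sections over $S$. A $\delta$-constant one-parameter deformation $C_S$ typically splits the singular point of $C_0$ into several singular points of $C_t$, and the points of the divisor $l_S$ on the normalisation move apart as well; the singular locus of (the strict transform of) $C_S$ is a finite multisection whose fibre configuration of infinitely near points changes with $t$. So ``build the relative embedded resolution inductively, each centre being a section'' does not get off the ground, and this is exactly the difficulty that M\"ohring's construction (blowing up the family of ideals $I(C_s,l_s)$, whose fibrewise blow-up gives $X(C_s,l_s)$) is designed to avoid. The reverse direction is worse: you invoke a simultaneous resolution of $X_S\to S$ after finite base change. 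For a rational surface singularity such a simultaneous resolution exists only for deformations in the Artin component, while the versal base of a sandwiched singularity has in general many components and the theorem is a statement about \emph{all} one-parameter deformations; indeed the whole point of the sandwiched-singularity machinery in this paper is to reach adjacencies not visible on the Artin component. (Even on the Artin component the dual graph of the nearby fibre is not constant, so the closing appeal to ``constancy of the resolution graph of $X_S$'' is also unavailable.) As written, your argument would at best identify deformations of $(C,l)$ with the deformations of $X(C,l)$ admitting weak simultaneous resolution, a strictly smaller class; bridging that gap is the real content of the cited theorem and cannot be obtained by resolving families fibrewise.
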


A simplified proof can be found in the thesis of Konrad M\"ohring \cite{moe}, 
who constructs the deformation  $X(C_S,l_S)$  directly from
$(C_S,l_S)$ by blowing up a family of complete ideals.

To study deformations of taut sandwiched singularities we use a specific 
representation with a decorated curve. We start with
singularities with reduced fundamental cycle, see also \cite[Ex. 1.5 (4)]{tds}.
We construct the resolution graph of a decorated curve. This is an embedded 
resolution graph for the curve $C$, with as usual arrows for the strict 
transforms $\wl C_i$, and decorations $\bar l(i)=0$, which we omit. We choose 
one end of the resolution graph of the singularity, whose   vertex $E_0$ will 
correspond the strict transform of  the first curve blown up. To a  vertex $E_i$  
we connect $-Z\cdot E_i$ $(-1)$-vertices with an arrow attached to it, 
except for $E_0$, to which we connect one vertex and arrow less: 
only  $-Z\cdot E_i-1$ ones.

\begin{example} The cyclic quotient singularity  $X_{37,11}$.
We start at the left end. The resulting graph is
\[
\bp(4,2)(0,-1.7)
\put(0,0){\vi{-4}}  \put(0,0){\lijn}
\put(0,0){\line(1,-2){0.45}} \put(0.5,-1){\circle{0.23}}
\put(0.5,-1.12){\vector(0,-1){0.5}} 
\put(0,0){\line(-1,-2){0.45}} \put(-0.5,-1){\circle{0.23}}
\put(-0.5,-1.12){\vector(0,-1){0.5}} 
\put(1,0){\cir} \put(1,0){\lijn}
\put(2,0){\vi  {-3}}  \put(2,0){\lijn}
\put(3,0){\cir}  \put(3,0){\lijn}
\put(4,0){\cir}  
\put(2,0){\line(0,-1){0.88}} \put(2,-1){\circle{0.23}}
\put(2,-1.12){\vector(0,-1){0.5}} 
\put(4,0){\line(0,-1){0.88}} \put(4,-1){\circle{0.23}}
\put(4,-1.12){\vector(0,-1){0.5}} 
\ep
\]
It blows down to the following decorated curve.
\[
\begin{picture}(6,3)(-3,-1.5)
\thicklines
\put(-2.5,0){\vector(1,0){5}}
\put(2.2,0.15){\makebox(0,0)[b]{$\scriptstyle 6$}}
\put(-.75,-1.5){\vector(1,2){1.5}}\put(.75,-1.5){\vector(-1,2){1.5}}
\cbezier(-2,-1)(-1,1)(1,-1)(2,1)
\put(1.8,0.9){\makebox(0,0)[r]{$\scriptstyle 4$}}
\put(2.05,1.1){\vector(1,2){0}}
\put(0.8,1.1){\makebox(0,0)[l]{$\scriptstyle 2$}}
\put(-0.8,1.1){\makebox(0,0)[r]{$\scriptstyle 2$}}
\end{picture}
\]
\end{example}

For cyclic quotient singularities this representation with smooth branches 
has the property that
$\min\{l(i),l(j)\}=C_i\cdot C_j+1$ for each pair of branches. It was observed 
by M\"ohring \cite{moe} that this property can be used to give a new proof 
of  Riemenschneider's conjecture that cyclic 
quotients deform only into cyclic quotients. 
Koll\'ar and Shepherd-Barron \cite{k-sb} derive  it from the stronger result that in a 
deformation of a rational singularity with reduced fundamental cycle 
the number of ends of the graph cannot go up. 
Their result can also be obtained with the present methods. We first treat the 
cyclic quotient case, where the argument is more transparent. 

\begin{lemma}
Let $(C,l)$ be the germ of a decorated curve with smooth branches with the property
that for each pair of branches
\[
 C_i\cdot C_j\leq \min\{l(i),l(j)\}\leq C_i\cdot C_j+1\;.
\] 
Then the only singularities of the space $X(C,l)$ are cyclic quotients.
\end{lemma}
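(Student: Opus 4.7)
My plan is to analyze the combinatorial structure of the embedded resolution $\wt Z(C,l)$ and show that each connected component of the set of exceptional divisors disjoint from $\wl C$ is a chain of smooth rational curves of self-intersection $\le-2$, from which cyclicity of the quotients follows by Hirzebruch--Jung. Since every branch $C_i$ is smooth, each blow-up is centered at a smooth point lying on one or more strict transforms $\wl C_i$, and the successive blow-ups organize into a rooted tree whose vertices are the exceptional curves, with root the first blow-up at the singular point of $C$. For each exceptional $E$, I write $d(E)$ for its depth in this tree, $r(E)$ for its number of children (so $E^2=-1-r(E)$), $B(E)$ for the set of branches $C_i$ whose chain of $l(i)$ blow-ups passes through the creation blow-up of $E$ (forcing $l(i)\ge d(E)$), and $A(E)=\{i\in B(E):l(i)=d(E)\}$ for the branches ending at $E$. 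Then $\wl C_i$ meets $E$ iff $i\in A(E)$, so $E$ is contracted iff $A(E)=\emptyset$.

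The main step is: if $E$ is contracted, it has at most one contracted child. Suppose instead that $F_1,F_2$ are distinct contracted children of $E$. Every blow-up is centered on a strict transform, so $B(F_j)\ne\emptyset$; choose $C_i\in B(F_1)$ and $C_k\in B(F_2)$. Because $F_1\ne F_2$, the strict transforms of $C_i$ and $C_k$ meet $E$ at distinct points at the moment of $E$'s creation, so $C_i$ and $C_k$ share exactly the $d(E)$ blow-ups along the path from the root to $E$ and diverge thereafter; hence $C_i\cdot C_k=d(E)$. Since $F_j$ is contracted, $l(i)>d(F_1)=d(E)+1$ and $l(k)>d(F_2)=d(E)+1$, so $\min\{l(i),l(k)\}\ge d(E)+2>C_i\cdot C_k+1$, contradicting the hypothesis.

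Because the blow-up graph is a tree, each contracted vertex has at most one contracted parent and at most one contracted child, so every connected component of the contracted locus is a chain. For the self-intersection bound, observe that each leaf of the blow-up tree has $A=B\ne\emptyset$ (no further blow-ups happen there), so leaves always meet $\wl C$; thus every contracted $E$ has $r(E)\ge1$, giving $E^2\le-2$. A chain of smooth rational curves with self-intersection $\le-2$ contracts to a cyclic quotient singularity, so every singularity of $X(C,l)$ is a cyclic quotient. The only bookkeeping difficulty is checking that the blow-up tree is genuinely a tree (each child has a single parent), but this is ensured by smoothness of the branches: strict transforms are transversal to exceptional curves, and the decoration blow-ups are taken at the single transversal point $\wl C_i\cap E$.
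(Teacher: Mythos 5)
Your proof is correct, but it takes a genuinely different route from the paper's. The paper chooses a branch $C_r$ with $l(r)$ maximal, performs the $l(r)$ blow-ups along $C_r$ first --- producing one explicit chain $E_1,\dots,E_{l(r)}$ --- and then uses $C_i\cdot C_r\le l(i)\le C_i\cdot C_r+1$ to check that each remaining branch needs at most one further blow-up and that the curve it meets is never contracted; the contracted locus is then visibly a union of subchains of $E_1,\dots,E_{l(r)-1}$. You instead analyze the full tree of infinitely near points and prove the local statement that a contracted curve has at most one contracted child, which is where the upper bound of the hypothesis enters for you, playing the role that the choice of a maximal branch plays in the paper. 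Your version is longer but more structural: the branching bound is a purely local condition on the dual graph, needs no distinguished branch, and is closer in spirit to what is required to bound the number of ends in the paper's subsequent lemma (which the paper again handles by choosing representative branches). Two steps you state as if definitional in fact rest on the lower bound $C_i\cdot C_j\le\min\{l(i),l(j)\}$: that $\overline{C}_i$ really still meets $E$ when $i\in A(E)$ (a later blow-up centered at $\overline{C}_i\cap E$ would come from a branch $C_j$ with $C_i\cdot C_j\ge d(E)+1>l(i)$, which is excluded), and that $B(F)\ne\emptyset$ for every exceptional $F$ (minimality of the resolution plus the same estimate rules out blow-ups at points where all branches have exhausted their decoration). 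Neither is a real gap, but each deserves a sentence.
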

\begin{proof}
Let $C_r$ be a branch such that $l(i)\leq l(r)$ for all branches $C_i$.
We construct the embedded resolution of $(C,l)$ in two steps. We first 
consecutively blow up $l(r)$ times in the origin of the strict transform 
of the branch $C_r$.  This introduces a chain $E_1,\dots,E_{l(r)}$ of 
exceptional curves.
If $l(i)=C_i\cdot C_r + 1$, we  blow up once in the intersection point of 
$E_{l(i)}$ and the strict transform of $C_i$, and we do not blow up further 
in $C_i$. The newly introduced $(-1)$-curve intersects the strict transform 
of $C_i$ on the minimal resolution and is therefore not part of the exceptional 
set for $X(C,l)$. If  $l(i)=C_i\cdot C_r $, we do not blow up in $C_i$ and 
the curve $E_{l(i)}$ does not belong to the exceptional set. This set is thus 
a subset of the chain $E_1,\dots,E_{l(r)-1}$, which may consist of several 
connected components.
\end{proof}

\begin{prop}
Cyclic quotient singularities deform only into cyclic quotients.
\end{prop}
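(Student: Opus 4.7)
The plan is to reduce the statement to the preceding lemma by means of the deformation theorem \cite[Thm. 4.4]{tds}. The key point is that every cyclic quotient singularity admits a decorated-curve presentation $(C,l)$ of a particularly rigid shape: the branches of $C$ are all smooth and
\[
\min\{l(i),l(j)\} = C_i\cdot C_j + 1
\]
for every pair $i\neq j$ of distinct branches, as the example $X_{37,11}$ above illustrates. So the first step I would carry out is to verify that the construction described in the paragraph just before that example (attaching $-Z\cdot E_i$ smooth arrows transverse to each exceptional curve $E_i$, minus one at the chosen end-vertex $E_0$) indeed produces such a representation for every $X_{p,q}$; this amounts to an induction on the length of the Hirzebruch--Jung chain, translating the continued-fraction expansion of $p/q$ into the decorations and pairwise intersection multiplicities of the arrows.

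Granting this, let $X_S\to S$ be an arbitrary 1-parameter deformation of the cyclic quotient over the germ of a smooth curve. By \cite[Thm. 4.4]{tds} it has the form $X(C_S,l_S)$ for a 1-parameter deformation $(C_S,l_S)$ of the decorated curve. The claim I would then establish is that the numerical identity above survives in the nearby fibre. The decoration $l$ is an integer-valued function on the set of branches, which is locally constant because of the simultaneous normalisation, so $l_s(i)=l(i)$. Smooth germs deform to smooth germs, so each $C_{i,s}$ remains a smooth branch. For a plane curve with only smooth branches one has $\delta(C)=\sum_{i<j} C_i\cdot C_j$, and the existence of a simultaneous normalisation is equivalent to $\delta$-constancy, so $\sum_{i<j} C_{i,s}\cdot C_{j,s}$ is preserved along $S$. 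Since local intersection multiplicities can only go up on specialisation, each individual intersection is preserved as well. Therefore the identity $\min\{l_s(i),l_s(j)\}=C_{i,s}\cdot C_{j,s}+1$ carries over to $(C_s,l_s)$, and the preceding lemma forces every singularity of $X(C_s,l_s)$ to be a cyclic quotient.

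The main obstacle is the first step: the explicit bookkeeping needed to establish the sharp identity $\min\{l(i),l(j)\}=C_i\cdot C_j+1$ for the decorated curve produced by the construction. Both inequalities of the lemma are used in the following argument, so one really needs the precise value, not merely a two-sided estimate. This requires distinguishing the arrows attached at the end-vertex $E_0$ from those attached further along the chain, and tracking how each blow-up in the chain simultaneously decreases the multiplicity $m(i)$ of a branch and the intersection multiplicity with the other branches that it meets. Once the identity is secured for the initial pair and propagated inductively along the chain, everything else in the argument is formal, and the same strategy should extend to any further class of taut sandwiched singularities characterised by a similarly sharp numerical invariant of a decorated-curve presentation.
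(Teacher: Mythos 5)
Your overall strategy is the paper's: pick the decorated-curve presentation with smooth branches and $\min\{l(i),l(j)\}=C_i\cdot C_j+1$, deform via \cite[Thm. 4.4]{tds}, and feed the nearby fibre into the preceding lemma. But there is a genuine gap at the decisive step. The lemma is a statement about \emph{germs}: to apply it you must verify, at each singular point $p$ of the deformed curve, the local two-sided inequality $(\wt C_i\cdot \wt C_j)_p\leq \min\{\tilde l_p(i),\tilde l_p(j)\}\leq (\wt C_i\cdot \wt C_j)_p+1$. What you establish is only the global identity: the total degree of the divisor $l(i)$ and the total pairwise intersection number $C_i\cdot C_j$ are deformation invariants, so their difference stays equal to $1$. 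This does not localize by itself, because in the nearby fibre both quantities redistribute: the intersection of $\wt C_i$ and $\wt C_j$ splits into several points $p$ with local multiplicities $n_p$ summing to $n$ (your assertion that ``each individual intersection is preserved'' is only true of the pairwise total --- a tacnode deforms into two nodes, so the local multiplicity at a single point certainly drops), and the decoration is a divisor on the normalisation whose support moves and may include points away from $\wt C_j$ altogether. Treating $l$ as a ``locally constant integer'' discards exactly the information the lemma needs.

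The missing argument is the short pigeonhole computation that constitutes the paper's proof: since $(C,l)$ must remain a decorated curve, $\tilde l_p(i)\geq n_p$ at every point, and summing $\tilde l_p(i)$ over all points of the support gives $l(i)=n+1=1+\sum_p n_p$; hence the excess of $1$ is concentrated at at most one point, so $\tilde l_p(i)\leq n_p+1$ everywhere, which is the required local upper bound (the lower bound being automatic). Without this step the proof does not close. A secondary remark: you identify the ``main obstacle'' as verifying the sharp identity for the initial presentation; the paper simply reads it off from the construction (as in the $X_{37,11}$ example), and the real content of the proof is the local analysis in the deformed fibre that your write-up omits.
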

\begin{proof}
Choose, as above,  a representation $X(C,l)$ with $(C,l)$ a decorated curve 
with smooth branches and the property that
$\min\{l(i),l(j)\}=C_i\cdot C_j+1$ for each pair of branches. 
Let $X(\wt C,\tilde l)$ be a general fibre of a 1-parameter deformation.
Consider a pair of branches. Suppose that $C_i\cdot C_j=n$ and that $l(i)=n+1$. 
Then $\wt C_i\cdot \wt C_j=\sum_p n_p=n$,
where the sum runs over the intersection points. The support of $\tilde l(i)$ 
on $\wt C_i$ may contain other points.
Now
\[
\sum_{p\in \wt C_i \cap \wt C_j} \tilde l_p(i) + \sum_{q\notin \wt C_j}
l_q(i)=l=n+1=1+\sum_{p\in \wt C_i \cap \wt C_j} n_p \;.
\]
Because always $\tilde l_p(i)\geq n_p$ we see that for at most one point
$\tilde l_p(i)= n_p+1$ while for the others $\tilde l_p(i)= n_p$.
So for each singularity $p$ of $(\wt C,\tilde l)$ the property
$( \wt C_i\cdot \wt C_j)_p\leq \min\{\tilde l_p(i),\tilde l_p(j)\}\leq 
(\wt C_i\cdot \wt C_j)_p+1$
holds.
\end{proof}

\begin{lemma}
Let $(C,l)$ be the germ of a decorated curve with smooth branches.
Suppose that the set of branches $B$ can be written as the {\rm(}not necessarily 
disjoint\/{\rm)} union $B_1\cup\dots\cup B_k$ such that for all $1\leq m\leq k$ the 
property
\[
 C_i\cdot C_j\leq \min\{l(i),l(j)\}\leq C_i\cdot C_j+1\;
\] 
holds for all pairs $(i,j)\in B_m\times B_m$.
Then the number of ends of the singularities of the space $X(C,l)$ 
is at most $k+1$.
\end{lemma}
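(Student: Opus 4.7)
The plan is to construct the embedded resolution of $(C,l)$ in $k$ stages, one for each $B_m$, iterating the construction from the proof of the previous lemma. First I choose a branch $r \in B$ with $l(r) = \max_{i\in B} l(i)$ and, by reordering the $B_m$'s, assume $r \in B_1$. Following the previous lemma's proof, I blow up $l(r)$ times at the successive infinitely near points of $C_r$, producing a main chain $E_1,\dots,E_{l(r)}$. The cyclic quotient property applied to the pairs $(r,i)$ with $i \in B_1$ gives $l(i) \in \{n_i, n_i+1\}$ for $n_i = C_r \cdot C_i$, so each strict transform $\wl{C_i}$ with $i \in B_1$ attaches at $E_{n_i}$, either directly or after one extra blow-up introducing a $(-1)$-curve that lies outside the exceptional set of $X(C,l)$.

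For each $m = 2,\dots,k$ in turn, I then choose $r_m \in B_m$ with $l(r_m) = \max_{i\in B_m} l(i)$. After the previous stages the strict transform of $C_{r_m}$ meets the current exceptional divisor at a single vertex $v_m$, and I blow up the remaining $l(r_m) - d_m$ times at its successive infinitely near points, where $d_m$ counts the blow-ups already carried out at points of $C_{r_m}$. This attaches a side chain to the existing divisor at $v_m$. The cyclic quotient property within $B_m$, applied to the pairs $(r_m,i)$ for $i \in B_m$, then handles all remaining branches of $B_m$ by exactly the argument of the previous lemma: each such $\wl{C_i}$ exits the side chain at its appropriate position.

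The resulting exceptional tree of $\wt Z(C,l)$ consists of one main chain together with $k-1$ side chains, each attached at a single vertex of the already built part of the tree. Such a tree has $2+(k-1)=k+1$ leaves. Deleting the curves that meet some strict transform $\wl{C_i}$ yields the exceptional set of $X(C,l)$, and each singularity's resolution graph is a connected component of what remains, hence a subtree of the tree above with at most $k+1$ ends.

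The main obstacle is to justify that each side chain in the construction attaches at a \emph{single} vertex of the existing divisor, even when the same branch lies in several $B_m$'s, and to control how branches shared between the $B_m$'s behave after the main chain blow-ups. The essential tool is the ultrametric property of pairwise intersections of smooth branches: for any three smooth branches $C_a,C_b,C_c$ the minimum of $\{C_a\cdot C_b,\;C_b\cdot C_c,\;C_a\cdot C_c\}$ is attained at least twice. Applied with $C_a=C_r$, $C_b=C_{r_m}$ and $C_c$ any previously resolved branch, this forces $C_{r_m}$ to diverge from the existing resolution at a unique vertex; applied within $B_m$ it shows that the cyclic quotient analysis of the previous lemma transfers verbatim to the reduced decoration $\bar l = l - d$ on the side chain, securing the $k+1$ bound on leaves.
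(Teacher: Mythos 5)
Your proof is correct and follows essentially the same route as the paper: the paper likewise picks one branch of maximal decoration from each $B_m$, resolves the decorated union of these $k$ distinguished branches first (a tree with at most $k+1$ ends), and observes that all remaining blow-ups produce exceptional curves meeting the strict transform of $C$, hence not exceptional for $X(C,l)$. Your stage-by-stage construction of the side chains, together with the ultrametric remark, is just a more explicit unwinding of that first step.
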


\begin{proof}
Again we construct the embedded resolution of $(C,l)$ in two steps.
For each subset $B_m$ we choose a branch $C_m$ with $l(m)$ maximal.
The first step is to  resolve the curve $\cup_m C_m$. As this curve has $k$ 
branches, the resulting embedded resolution graph has (at most) $k+1$ ends. 
The exceptional curves of the additional blow-ups needed to resolve $(C,l)$ 
are not exceptional for $X(C,l)$.
\end{proof}

\begin{prop}
In a deformation of a rational singularity with reduced fundamental cycle 
the number of ends cannot increase.
\end{prop}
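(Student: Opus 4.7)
The plan is to extend the cyclic-quotient argument just given. Let $X$ be a rational singularity with reduced fundamental cycle and with $e$ ends in its resolution graph. Using the recipe recalled earlier in the paper for singularities with reduced fundamental cycle, I first represent $X$ as $X(C,l)$ with a decorated curve $C$ having only smooth branches: fix one end $E_0$ as base and attach $-Z\cdot E_i$ arrows to each vertex $E_i\ne E_0$ and $-Z\cdot E_0-1$ arrows to $E_0$. Let $B$ be the resulting set of branches.

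For each of the remaining ends $E_*^{(1)},\dots,E_*^{(e-1)}$, denote by $P_m$ the unique simple path in the resolution tree from $E_0$ to $E_*^{(m)}$, and let $B_m\subset B$ be the set of branches whose arrow is attached to some vertex of $P_m$. Since every vertex of a tree lies on at least one such path, the $B_m$ cover $B$. The core claim is that each $B_m$ satisfies the cyclic-quotient property $C_i\cdot C_j\le\min\{l(i),l(j)\}\le C_i\cdot C_j+1$: since $P_m$ is a chain, the arrows attached to it form a cyclic-quotient-type subconfiguration, and the same explicit computation that establishes the cyclic-quotient property in the paper's running example applies.

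For a 1-parameter deformation $(C_S,l_S)$ of $(C,l)$, the cover transfers to each singularity $p$ of the generic fibre $(\wt C,\tilde l)$ by the same argument as in the cyclic-quotient case. For a pair $i,j\in B_m$ with $l(i)=C_i\cdot C_j+1$ the identity
\[
\sum_{p\in\wt C_i\cap\wt C_j}\tilde l_p(i)+\sum_{q\notin\wt C_j}\tilde l_q(i)=l(i)=1+\sum_{p\in\wt C_i\cap\wt C_j}n_p,
\]
together with $\tilde l_p(i)\ge n_p$, forces $\tilde l_p(i)\le n_p+1$ at every intersection point. So at each singularity $p$ of the generic fibre the local sets $B_m^p$ of branches of $B_m$ through $p$ still cover the local branches and still satisfy the cyclic-quotient property at $p$. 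Applying the preceding lemma to each germ then bounds its number of ends by $(e-1)+1=e$, which is the desired conclusion.

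The main obstacle is the verification, on the special fibre, of the cyclic-quotient property on each $B_m$. Although the intuition that the arrows attached to a single chain $P_m$ form a cyclic-quotient-type configuration is clear, making it precise requires some combinatorial bookkeeping: one has to track how the decoration of an arrow attached to a vertex of $P_m$ is influenced by the blow-down of the off-path vertices, and compare the result with the intersection numbers of the corresponding branches. Once this claim is settled, the rest of the argument is a direct generalisation of the cyclic-quotient computation.
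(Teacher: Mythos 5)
Your proof follows the paper's argument essentially verbatim: the same covering of the branches by sets $B_m$ indexed by the chains from the root to the other ends, the same transfer of the inequality $(\wt C_i\cdot \wt C_j)_p\leq \min\{\tilde l_p(i),\tilde l_p(j)\}\leq (\wt C_i\cdot \wt C_j)_p+1$ to each singular point of the deformed curve, and the same appeal to the preceding lemma to bound the number of ends. The ``core claim'' that each $B_m$ satisfies $\min\{l(i),l(j)\}=C_i\cdot C_j+1$, which you defer as combinatorial bookkeeping, is asserted equally tersely in the paper, so your write-up matches the paper's own level of detail.
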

\begin{proof}
We choose a representation with a decorated curve with smooth branches.
For each end of the graph of the singularity (except the root) we choose a 
curve $C_m$, whose strict transform is connected to this end by a
$(-1)$-curve. The set $B_m$ contains all branches, which are connected 
by $(-1)$-curves to the chain from the root to $C_m$. Then 
$\min\{l(i),l(j)\}= C_i\cdot C_j+1$ for all $i,j\in B_m$. As before, we deduce 
that $( \wt C_i\cdot \wt C_j)_p\leq \min\{\tilde l_p(i),\tilde l_p(j)\}\leq 
(\wt C_i\cdot \wt C_j)_p+1$ for each singular point of the deformed 
curve $(\wt C,\tilde l)$, through which branches in the set $B_m$ pass. 
Therefore we have a division in at most $k$ sets, and each singularity of  
$X(\wt C,\tilde l)$ has at most $k+1$ ends.
\end{proof}

\begin{remark}
For non-reduced fundamental cycle the number of ends can increase.
Examples are provided by the deformations of Lemma \ref{lemetilde}.
We give a sandwich description of the deformation for the case of
a surface singularity of type $\wt E_6$ with $-b_{1,1}=-4$ and all other curves $-2$.
It has a sandwiched representation with decorated curve $(E_{12}, 12)$, 
where $E_{12 }$ is the curve
$x^3+y^7+axy^5=0$. It deforms into $(\wt E_7,(4,4,4))$, giving a $2$-star.
\end {remark}

Next we study sandwiched singularities in the classes  III.2 and III.3. 
The singularities of type III.2 whose graph contains a $D_4$ subgraph, are 
not sandwiched, but they are simple as they are dihedral quotients. The 
sandwiched ones can be seen as special case of the type III.3, if we allow 
the arms to be shorter.
This means that we are looking at graphs of the form:
\[
\bp(6,1,5)(1,-1)
\put(1,0){\cir}  \put(1,0){\keten}
\put(3,0){\vir}  \put(3,0){\lijn}
\put(4,0){\ci{-2}}  
\put(4,0){\lijn}\put(5,0){\cir}
\put(5,0){\keten}\put(7,0){\cir}
\put(4,0){\line(0,-1){1}} 
\put(4,-1){\cir}
\ep
\]

\begin{prop}\label{propdefonly}
Sandwiched singularities with graph as above deform only to singularities of the 
same type or to singularities with reduced fundamental cycle and at most three ends.
\end{prop}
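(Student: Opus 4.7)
The plan is to adapt the decorated-curve techniques developed in the reduced-fundamental-cycle case. The singularity is represented as $X(C,l)$ and deformations are analysed via the 1-parameter deformations of $(C,l)$ coming from the De Jong--Van Straten theorem quoted above.

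First I would construct a decorated curve $(C,l)$ representing the given singularity, using the graphical recipe illustrated in the example $X_{37,11}$. Starting the blow-up from an end with $-Z\cdot E_0>0$ (the far end of the left chain containing the square, since the multiplicity-$2$ central vertex forces $-Z\cdot E=0$ at the pendant end and the right-chain end), one reads off the branches and decorations. The non-reducedness of $Z$ at the central vertex is reflected in a distinguished point $q\in C$ where $(C,l)$ has a singular configuration: either a branch with a cusp-like singularity, or several smooth branches meeting with extra tangency. Away from $q$ the branches are transverse and smooth, and they split naturally into two groups $B_1,B_2$ covering the branch set, each satisfying the close-packing inequality $\min\{l(i),l(j)\}\le C_i\cdot C_j+1$ of the first lemma above.

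Next, by the De Jong--Van Straten theorem, one-parameter deformations of $X(C,l)$ correspond to $\delta$-constant deformations $(C_S,l_S)$ of $(C,l)$. At the special point $q$ there is a rigid dichotomy: either the singular configuration persists generically (the generic decorated curve is again of the same combinatorial form, and $X$ deforms to a singularity of the same type, or to a simpler special type such as III.1, III.2, or a cyclic quotient obtained by collapsing a chain); or the configuration smooths completely in the general fibre, so that $(\wt C,\tilde l)$ has only transverse meetings of smooth branches, and $X(\wt C,\tilde l)$ has reduced fundamental cycle.

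In the smoothed case the propagation argument used in the proof of the cyclic-quotient proposition applies within each of the two groups: the inequality $\min\{\tilde l_p(i),\tilde l_p(j)\}\le (\wt C_i\cdot\wt C_j)_p+1$ holds at every singular point of $(\wt C,\tilde l)$ for pairs in a common group. Applying the second lemma above with $k=2$ then bounds the number of ends of each singularity of $X(\wt C,\tilde l)$ by three. The main obstacle is verifying the dichotomy in the second step: one must exclude intermediate $\delta$-constant deformations that only partially degenerate the singular configuration at $q$. This reduces to a local analysis at $q$, using the precise combinatorial structure imposed by the pendant $(-2)$-curve, which leaves no room for a partial smoothing within a $\delta$-constant family. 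Once the dichotomy is established, the rest of the argument is a direct application of the two preceding lemmas.
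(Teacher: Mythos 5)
There is a genuine gap, in two places. First, the ``rigid dichotomy'' you propose at the distinguished point $q$ is false. The main branch $C_0$ in the paper's representation is an $A_{2k}$-singularity, and an $A_{2k}$ admits $\delta$-constant deformations into one $A_{2l}$ with $0\le l<k$ together with several $A_{2m_i-1}$'s, $k=l+\sum m_i$. These are exactly the ``intermediate'' partial degenerations you say must be excluded --- but they cannot be excluded; they exist and must be handled. The paper does so by observing that each local piece of such a degeneration is harmless: the $A_{2l}$-point reproduces a (smaller) graph of the same type, while each $A_{2m-1}$-point gives a singularity with reduced fundamental cycle and at most three ends. So the correct statement is a pointwise trichotomy over the singular points of the deformed curve, not a global either/or.

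Second, you cannot invoke the two lemmas from the reduced-fundamental-cycle case, because the close-packing inequality $C_i\cdot C_j\le\min\{l(i),l(j)\}$ fails for the initial decorated curve: e.g.\ a left-arm branch has $C_0\cdot C_i=2m$ but $\min\{l(0),l(i)\}=m+1$, so the left inequality fails as soon as $m\ge 2$. The branches here are highly tangent to $C_0$, which is precisely what encodes the non-reduced fundamental cycle, and no partition into two groups repairs this. The paper's actual argument replaces your appeal to those lemmas by an induction on the number of branches: one compares $X(\wt C,\tilde l)$ with $X(\wt C\setminus\wt C_i,\tilde l)$ and proves, by counting intersection multiplicities of $\wt C_i$ with $\wt C_0$ at the infinitely near points of the resolution, that at most one extra blow-up (necessarily at a smooth point of the exceptional divisor or off it) is needed; hence the graph only shrinks or has weights made more negative. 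This multiplicity count is the technical heart of the proof and is absent from your proposal.
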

\begin{proof}
We start by describing  a decorated curve $(C,l)$.
 We need some notation.
The  left arm of the graph is $E_{1,1}+\dots+E_{1,k}$, with $E_{1,k}$ the $(-3)$.
The right arm is $E_{2,1}+\dots+E_{2,s+1}$ and the short arm consists
only of $E_{3,1}$. 
First look at the triple point graph of this form, having exactly one $(-3)$
and further only $(-2)$'s. A decorated curve giving 
this graph is  $(A_{2k}, 2k+4+s)$. We call the curve $C_0$.
To make the self-intersections more negative we add branches. They come 
in three types, one for each arm. 
On the left arm, if a $(-1)$ intersects $E_{1,m}$, then we have a 
smooth branch $C_i$ with $C_0\cdot C_i= 2m$ and $l(C_i)=m+1$.
We make the short arm $E_{3,1}$ more negative with a smooth
branch $C_i$ with $C_0\cdot C_i= 2k+1$ and $l(C_i)=k+2$.
Finally, if  a $(-1)$ intersects $E_{2,n}$ on the right arm, then we have
a branch $C_i$ of type $A_{2k}$ with  $C_0\cdot C_i= 4k+2+n$ and $l(C_i)=2k+3+n$.

We use induction on the number of branches of $(C,l)$.
If the curve consists only of $(C_0,l(0))$ the claim is true
as an $A_{2k}$ deforms with $\delta$-const into one $A_{2l}$, $0\leq l<k$
and some $A_{2m_i-1}$ with $k=l+\sum m_i$. The $(A_{2l}, 2l+1+t)$
gives at most a smaller graph of the same type, while 
$X(A_{2m-1},(m+t_1,m+t_2))$ has reduced fundamental cycle and at most three ends.

Now consider a deformation $X(\wt C, \tilde l)$  of $X(C,  l)$ where
$C$ has several branches. Let $C_i$ be a branch different from $C_0$.
We wish to compare the singularities of $X(\wt C, \tilde l)$ and
$X(\wt C\setminus \wt C_i, \tilde l)$.
The Proposition follows from the following claim.

\begin{claim}
The  graph of  the singularities of $X(\wt C,\tilde l)$
is a subgraph of the graph of $X(\wt C\setminus \wt C_i,\tilde l)$
or it is obtainable from a subgraph by making some self-intersections more negative. 
\end{claim}

We first resolve the decorated curve  $(\wt C\setminus \wt C_i,\tilde l)$.
If this resolution also resolves  $(\wt C,\tilde l)$, i.e., no further blow ups on the 
strict transform of $\wt C_i$ are needed, then the exceptional curves intersected 
by the strict transform of $\wt C_i$ are not exceptional for 
$X(\wt C,\tilde l)$, and the graph of  the singularities of $X(\wt C,\tilde l)$
is a subgraph of the graph of $X(\wt C\setminus \wt C_i,\tilde l)$. 

We shall show that if this is not the case, then there is exactly one extra 
blow-up needed. If the center of the blow up does not lie on an exceptional 
divisor, we get just one $(-1)$-curve and no new singularity.
If the center is a smooth point on an irreducible component $E_a$ of the 
exceptional divisor, then the self intersection $E_a\cdot E_a$
is made more negative. The graph of the singularity in question is therefore 
obtainable from a subgraph of the  graph of $X(\wt C\setminus \wt C_i,\tilde l)$.
Finally, if the center is an intersection point of two divisors $E_a$ and $E_b$, then 
the newly introduced $(-1)$, being
not exceptional for 
$X(\wt C,\tilde l)$, breaks up the graph; the self-intersections of 
$E_a$ and $E_b$ become more negative.
Therefore the graphs for $X(\wt C,\tilde l)$ are obtainable from
subgraphs of the graph of $X(\wt C\setminus \wt C_i,\tilde l)$.

We look only at the intersection of a branch $\wt C_i$ with $\wt C_0$.
If $C_i$ is  a smooth branch with $C_0\cdot C_i= 2m$ and $l(C_i)=m+1$,
then one has to blow up in $m+1$ points to resolve $(\wt C_i,m+1)$.
Let $m_{0,p}$
be the multiplicity of  $\wt C_0$ in such a point $p$.
As $\wt C_0\cdot \wt C_i = \sum _p m_{0,p}=2m$,  it follows that either 
$m_{0,p}=2$ for $m$ points and one point does not lie on $\wt C_0$, or all 
$m+1$ points lie on $\wt C_0$ and for two of them $m_{0,p}=1$. 
A similar argument settles the case of  a smooth
branch $C_i$ with $C_0\cdot C_i= 2k+1$ and $l(C_i)=k+2$.

Finally we look at
a branch $C_i$ of type $A_{2k}$ with  $C_0\cdot C_i= 4k+2+n$ and $l(C_i)=2k+3+n$. 
To resolve $(\wt C_i,l(\wt C_i))$ we need $k+3+n$  blow ups, and $\wt C_i$ 
has multiplicity $2$ in $k$ of them. In these points the multiplicity
of $\wt C_0$ can be 2, 1 or 0. 
As intersection multiplicity $\wt C_0\cdot \wt C_i$ is 
$4k+2+n$, there are only two possibilities. The first one is 
$k\times 4 +  (n+1)\times 1 + 0$, the same numbers as for 
$C_0\cdot  C_i$, or in one multiplicity 2 point of $\wt C_i$ the multiplicity of 
$\wt C_0$ is 1. Then $4k+2+n=(k-1)\times 4 +2+2+  (n+2)\times 1 $.

Therefore in all cases at most one point blown up does not lie on the 
strict transform of $\wt C_0$.
\end{proof}

Singularities with a graph of type III.4 are not sandwiched if the graph has
the graph of Proposition \ref{notsandwich} as subgraph. To be sandwiched  
some vertex weights on this subgraph have to be more negative. We distinguish 
three types, with different sandwich representation.
Let the left arm be $E_{1,2}+E_{1,1}$, where always $-b_{1,1}=-2$,
the short arm $E_{3,1}$ and the right arm  $E_{2,1}+E_{2,2}+\dots+E_{2,k}$. 
For each of the three types the singularity of lowest multiplicity is a quadruple 
point, realisable as $X(C_0,l)$ with $C_0$ irreducible. 

For the first type  $-b_{3,1}=-4$ (for the quadruple point).
We take as decorated curve $(E_6,k+7)$. To make the self-intersection of 
$E_{3,1}$ (the first blown 
up curve) more negative  we add one or more smooth branches 
with $l=2$, for  $E_{1,2}$ smooth branches with $l=3$, and finally for  
$E_{2,t}$ on the right arm one or more branches $(E_6,t+7)$, intersecting 
$C_0$ with  multiplicity $12+t$.

The second type has always $-b_{3,1}=-3$. If also $-b_{1,2}=-3$, we get
the quadruple point with the irreducible curve $(E_8,k+8)$.
To make $E_{1,2}\cdot E_{1,2}$  more negative we add one or more smooth branches 
with $l=2$ and for  $E_{2,t}$ on the right arm one or more branches 
$(E_8,t+8)$, intersecting  $C_0$ with 
multiplicity $15+t$.

For the last type always $-b_{3,1}=-3$ and $-b_{1,2}=-2$.
In this case  $E_{2,k}$ is the first blown up curve. For the quadruple point
the only other $(-3)$-curve besides $E_{3,1}$ is $E_{2,2}$. We take a curve
equisingular with $(x^3+y^{3k-1},4+3k)$; here we may assume that $k>2$, as 
$k=2$ is dealt with in the previous case. The only curves, which can be made 
more negative, are on the right arm, and we use smooth branches for this purpose.

\begin{prop}
Sandwiched singularities with graph of type\/ {\rm III.4} deform only to singularities of  
type  {\rm I, II} or\/ {\rm III.1} up till\/  {\rm III.4}.
\end{prop}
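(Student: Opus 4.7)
The plan is to mirror the inductive structure of Proposition~\ref{propdefonly}. For each of the three families of type~III.4 singularities distinguished in the paragraphs above the statement, fix the prescribed decorated-curve model $(C,l)$ and induct on the number of branches $\#B$ of $C$.

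For the base case $\#B=1$, the curve $C_0$ is equisingular to $E_6$, $E_8$ or $x^3+y^{3k-1}$, depending on the type, and a $1$-parameter deformation of $(C_0,l)$ gives a $\delta$-constant degeneration of $C_0$ together with a compatible distribution of the decoration onto the branches of the deformed curve $\wt C$. I would enumerate the $\delta$-constant adjacencies of each of these three plane-curve singularities and, for every resulting multi-germ and every admissible splitting of $l$, build the embedded resolution of $(\wt C,\tilde l)$. Because each component of $\wt C$ is smooth or of ADE type, the resolution is transparent and the resulting sandwich graph has at most one trivalent vertex; a direct inspection against Laufer's list in Table~\ref{tableA} shows that only types I, II, III.1, III.2, III.3 or III.4 arise.

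For the inductive step I would pick a branch $\wt C_i$ of $\wt C$ other than the deformation of $C_0$ and compare $X(\wt C,\tilde l)$ with $X(\wt C\setminus \wt C_i,\tilde l)$. The latter is a deformation of $X(C\setminus C_i,l)$, which has one fewer branch and by induction is of allowed type. The Claim from the proof of Proposition~\ref{propdefonly} then applies verbatim: the graph of $X(\wt C,\tilde l)$ is either a subgraph of the graph of $X(\wt C\setminus \wt C_i,\tilde l)$ or is obtained from such a subgraph by making one or two vertex weights more negative. A short combinatorial check verifies that the collection of graphs of types I, II, III.1, III.2, III.3 and III.4 is closed under both operations.

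The hard part is twofold. First, the multiplicity count underlying the Claim must be redone for each new branch type entering the three III.4 families---smooth branches with $l=2$ or $l=3$ used to make $E_{3,1}$ or $E_{1,2}$ more negative, and $E_6$- or $E_8$-branches meeting $C_0$ with multiplicity $12+t$ or $15+t$---to show that at most one of the base points resolving $(\wt C_i,\tilde l(i))$ lies off the strict transform of $\wt C_0$, exactly as in the last three paragraphs of the proof of Proposition~\ref{propdefonly}. Second, and more substantially, the base case of the third family requires enumerating the $\delta$-constant adjacencies of the unbounded family $x^3+y^{3k-1}$ together with all admissible splittings of the large decoration $l=3k+4$; I expect the bookkeeping in this step, uniform in $k$, to be the main obstacle.
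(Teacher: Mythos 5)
Your inductive step is essentially the paper's: you invoke the same Claim from the proof of Proposition~\ref{propdefonly}, redo the multiplicity counts for the new branch types (the $E_6$- and $E_8$-branches meeting $C_0$ with multiplicity $12+t$ resp.\ $15+t$, and the smooth branches with $l=2$ or $l=3$) to show that at most one of the points resolving $(\wt C_i,\tilde l(i))$ lies off the strict transform of $\wt C_0$, and observe that the allowed list of graphs is closed under taking subgraphs and making weights more negative. That part is sound and matches the paper. The genuine gap is in your base case: you propose to enumerate all $\delta$-constant adjacencies of $E_6$, $E_8$ and $x^3+y^{3k-1}$ together with all admissible redistributions of the decoration, and you yourself flag the third family (decoration $l=3k+4$, uniform in $k$) as the main obstacle without carrying it out. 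As written, the argument is therefore incomplete exactly where the real difficulty sits; the paper itself remarks that for the third type this enumeration is ``not so easy''.

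The paper sidesteps the enumeration entirely. In each of the three families the one-branch space $X(C_0,l)$ is a rational quadruple point obtainable from a triple point, hence simple by Proposition~\ref{proprqp}; its deformations to other quadruple points occur on the Artin component (the base space is $B(1)$ with two components) and so, by Proposition~\ref{propartin}, are obtainable from double and triple points, i.e.\ of type I, II or III.1--III.9. Since sandwiched singularities deform only into sandwiched singularities and types III.5--III.9 are excluded by Proposition~\ref{notsandwich}, the base case lands in types I, II, III.1--III.4 with no case analysis at all. You should either substitute this argument for your base case, or actually supply the uniform-in-$k$ bookkeeping for $x^3+y^{3k-1}$ with $l=3k+4$; in its current form the proposal does neither.
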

\begin{proof}
We use again induction on the number of branches. For the induction start we 
have to describe $\delta$-const deformations of $(C_0, l)$. For $E_6$ and $E_8$ 
this is not difficult, but for the third type it is not so easy. Therefore we use a 
different argument.
By Proposition \ref{proprqp}  quadruple points, obtainable from triple points, 
are simple. As sandwiched singularities deform only into sandwiched singularities, 
we get the statement of the Proposition.

For the induction step we use the same claim as in the proof of Proposition \ref{propdefonly}.
As before the result follows if we can prove that either the resolution 
$(\wt C\setminus \wt C_i,\tilde l)$
is  also the resolution of  $(\wt C,\tilde l)$,  or that only
one extra blow up in a smooth point of the strict transform of $\wt C_i$ is needed.
For smooth branches this is the same argument as before. It remains to look at 
branches of type $E_6$ and $E_8$. Both cases being similar, we do here 
only the last one.

If $E_8$ deforms $\delta$-const to a collection of $A_k$ singularities, then the
multiplicities of the (infinitely near) points in the minimal embedded  resolution 
are $(2,2,2,2)$, whereas they are $(3,2)$ if there is a triple point.
This can be checked from the list of possible combinations, but it is all the 
information we need here.
Therefore the multiplicities in the resolution of 
 $(E_8,t+8)$ are $(3,2, 1^{t+3})$ or $(2^4,1^t)$.
The intersection multiplicity with $\wt C_0$ has to be $15+t$.
If the multiplicities in the minimal resolution of $\wt C_0$ are $(3,2)$
then we can get $15+t$ as $9+4+(t+2)+0$, or $9+2+2+(t+2)$, but not
from a $\wt C_i$ with multiplicities $(2^4,1^t)$, as
$6+4+2+2+t=14+t$. If  $\wt C_0$ is of type $(2^4)$, then we can get
$6+4+2+2+(t+1)$, or $4\times4+(t-1)+0$ or $3\times 4+2+2+(t-1)$. 
So there is at most one smooth  point of the strict transform of
$\wt C_i$ on the resolution of $(\wt C_0,\tilde l)$, which still has to be blown up. 
\end{proof}

\begin{cor}
Sandwiched singularities are simple if and only if they are taut and 
quasi-homogeneous.
\end{cor}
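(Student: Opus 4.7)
The plan is to derive both directions by assembling results already established in the paper. For the ``only if'' implication, I would observe that sandwiched singularities are rational, since they admit a birational map to $(\C^2,0)$, and then invoke the Corollary following Lemma~\ref{lemetilde}: every rational singularity that is not obtainable from a rational double or triple point deforms into a star, which carries a modulus and is therefore not simple. Hence a simple sandwiched singularity has its graph in Laufer's Table~\ref{tableA}, which is precisely the table of taut quasi-homogeneous normal surface singularities.

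For the ``if'' direction, I would use Theorem~4.4 of~\cite{tds} to confine deformations to the sandwiched category, and then carry out a case analysis within Laufer's table. Proposition~\ref{notsandwich} rules out the types III.5--III.9 and any graph containing a $D_4$ subgraph from being sandwiched, so the taut quasi-homogeneous sandwiched singularities fall into one of the classes: types I and II (cyclic quotients), III.1, or the sandwiched sub-cases of III.2, III.3 and III.4. Simpleness of types I and II follows from~\cite{EV}, since they are quotient singularities. Proposition~\ref{propdefonly} restricts deformations of types III.2 and III.3 to the same type or to singularities with reduced fundamental cycle and at most three ends, which again lie in the taut sandwiched part of the table; the final Proposition of the section does the same for the sandwiched sub-cases of III.4. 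Since a versal deformation has bounded multiplicity and Laufer's table has only finitely many graphs of any given multiplicity, tautness then yields only finitely many isomorphism classes of deformation, i.e.\ simpleness.

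The main technical obstacle is that type III.1 is not controlled by a dedicated proposition in the section. I would treat it by rerunning the decorated-curve induction from Proposition~\ref{propdefonly}, choosing a sandwich representation whose embedded resolution produces the III.1 graph (for instance by degenerating the chain between the central $(-2)$ and the central square of III.3 so that the two vertices coincide). The branch-by-branch analysis should carry over verbatim, identifying every deformation as of type I, II or III.1 and thereby closing the remaining gap in the case analysis.
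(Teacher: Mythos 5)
Your overall strategy coincides with the paper's: the ``only if'' direction is exactly Proposition \ref{notsimple} (sandwiched singularities are rational, so a simple one must be obtainable from a rational double or triple point, i.e.\ have its graph in Table \ref{tableA}), and the ``if'' direction is the case-by-case confinement of deformations using Theorem 4.4 of \cite{tds} and the decorated-curve propositions of the section. Two points need repair, however. First, type III.1 is not an uncovered case: these are precisely the non-cyclic taut quasi-homogeneous singularities with \emph{reduced fundamental cycle}, and the Proposition asserting that in a deformation of a rational singularity with reduced fundamental cycle the number of ends cannot increase already handles them. A III.1 singularity is $X(C,l)$ for a decorated curve with smooth branches; in a $\delta$-constant deformation the branches stay smooth, so every adjacent singularity again has reduced fundamental cycle and at most three ends, hence is of type I, II or III.1 and taut. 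Your proposed patch --- rerunning the induction of Proposition \ref{propdefonly} after ``merging'' the central $(-2)$ with the square of a III.3 graph --- is both unnecessary and not actually a construction of the III.1 graph, whose central vertex has weight $\leq -3$ and which has no distinguished central $(-2)$; if you want a direct argument, the all-smooth-branches representation is the correct one.

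Second, your finiteness step rests on the claim that Laufer's table contains only finitely many graphs of a given multiplicity; this is false (all the chains $A_n$ have multiplicity two). What one needs is that only finitely many resolution graphs occur among the singular fibres of the versal deformation of a fixed $X(C,l)$, and this follows because all deformations are $\delta$-constant deformations of the fixed decorated curve: the local topological types of $(C_s,l_s)$ at the singular points of a nearby fibre are bounded by $\delta(C)$ and by the decoration $l$, so only finitely many graphs can arise, and tautness then converts finitely many graphs into finitely many isomorphism classes. With these two corrections your argument agrees with the paper's.
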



\begin{thebibliography}{99}



\bibitem{ar}
V. I. Arnol'd, 
\textit{Normal forms of functions near degenerate critical points, the Weyl
 groups $A_k$, $D_k$, $E_k$ and Lagrangian singularities}.
(Russian)  Funkcional. Anal. i Prilo\v zen.  \textbf{26}  (1972),  no. 4, 3--25.

\bibitem{BrGa}
J. W. Bruce and T. J. Gaffney, 
\textit{Simple singularities of mappings $\C,0\to\C^2,0$}.
 J. London Math. Soc. (2) \textbf{26}  (1982),   465--474.

\bibitem{EV}
H\'el\`ene Esnault and Eckart Viehweg, 
\textit{Two-dimensional quotient singularities deform to quotient 
singularities}. Math. Ann. \textbf{271} (1985), 439--449. 


\bibitem{FN}
Anne Fr\"uhbis-Kr{\"u}ger and Alexander Neumer, 
\textit{Simple Cohen-Macaulay codimension 2 singularities}.
Comm. Algebra \textbf{38} (2010), 454--495.

\bibitem{gab}
A. M. Gabrielov, 
\textit{Bifurcations, Dynkin diagrams and the modality of isolated
 singularities}.
(Russian)  Funkcional. Anal. i Prilo\v zen.  \textbf{8}   (1974),  no. 2, 7--12.

\bibitem{gi}
Marc Giusti, 
\textit{Classification des singularités isolées simples d'intersections
 complètes}.
In:  Singularities, Part 1 (Arcata, Calif., 1981), 
pp. 457--494, Proc. Sympos. Pure Math. \textbf{40},  Amer. Math. Soc., Providence, 
RI,  1983. 

\bibitem{GD}
Gert-Martin Greuel, Nguyen Hong Duc,
\textit{Right simple singularities in positive characteristic}.  \texttt{arXiv:1206.3742} 

\bibitem{tdq}
Theo de Jong and Duco van Straten, 
\textit{On the base space of a semi-universal deformation of rational quadruple points}.
Ann. of Math. (2) \textbf{134} (1991), 653–678. 

\bibitem{tds}
T. de Jong and D. van Straten,  
\textit{Deformation theory of sandwiched singularities}.
 Duke Math. J.   \textbf{95}  (1998),   451--522.

\bibitem{ka}
Ulrich Karras, 
\textit{Normally flat deformations of rational and minimally
elliptic singularities}.  In:
Singularities, Part 1 (Arcata, Calif., 1981),  pp. 619--639,
Proc. Sympos. Pure Math. \textbf{40}, Amer. Math. Soc., Providence, RI, 1983.

\bibitem{k-sb}
J. Koll\'ar and N. I. Shepherd-Barron,
\textit{Threefolds and deformations of surface singularities}.
Invent. Math.  \textbf{91} (1988), 299--338.


\bibitem{la1}
  Henry B. Laufer,
  \textit{Taut Two-Dimensional Singularities}.
  Math. Ann.  \textbf{205} (1973), 131--164.



\bibitem{la3}
  Henry B. Laufer,
  \textit{Ambient Deformations for Exceptional Sets in Two-Manifolds}.
  Invent. math.  \textbf{55} (1979), 1--36.

\bibitem{le}
L\^e D\~ung Tr\`ang,
\textit{Les singularit\'es sandwich}.  
In: Resolution of singularities (Obergurgl, 1997), pp. 457–483,
Progr. Math. \textbf{181}, Birkh\"auser, Basel, 2000.

\bibitem{moe}
Konrad Möhring, 
\textit{On Sandwiched Singularities}.
Dissertation, Mainz 2004.\\
URN: \texttt{urn:nbn:de:hebis:77-4878}

\bibitem{pal}
V. P. Palamodov,   \textit{Moduli in versal deformations of complex spaces}.
(Russian)  Dokl. Akad. Nauk SSSR  \textbf{230}  (1976),  no. 1, 34--37.

\bibitem{st-p}
Jan Stevens,  
\textit{Partial resolutions of rational quadruple points}. 
Internat. J. Math. \textbf{2} (1991), 205--221.



\bibitem{vi}
E. B. Vinberg,  
\textit{Complexity of actions of reductive groups}.
(Russian)  Funktsional. Anal. i Prilozhen.  \textbf{20}  (1986),  
no. 1, 1--13.


\bibitem{wa}
Jonathan M. Wahl,   
\textit{Simultaneous resolution and discriminantal loci}.
 Duke Math. J.  \textbf{46}  (1979),   341--375.




\bibitem{wall}
C. T. C. Wall, 
\textit{Classification of unimodal isolated singularities of complete
 intersections}.
 Singularities, Part 2 (Arcata, Calif., 1981), 
pp.  625--640, Proc. Sympos. Pure Math. \textbf{40}, 
Amer. Math. Soc., Providence, RI,  1983. 


\end{thebibliography}
\end{document}